\numberwithin{equation}{section}
\numberwithin{figure}{section}
\theoremstyle{plain}
\newtheorem{thm}{\protect\theoremname}
\theoremstyle{plain}
\newtheorem{lem}[thm]{\protect\lemmaname}
\theoremstyle{remark}
\newtheorem{rem}[thm]{\protect\remarkname}
\providecommand{\lemmaname}{Lemma}
\providecommand{\remarkname}{Remark}
\providecommand{\theoremname}{Theorem}
\begin{document}
\title[The fundamental Lepage form in two independent variables]{The fundamental Lepage form in two independent variables: a generalization
using order-reducibility}
\author{Zbyn\v{e}k Urban\and Jana Voln\'a}
\address{Z. Urban and J. Voln\'a\\
Department of Mathematics, Faculty of Civil Engineering\\
V\v{S}B-Technical University of Ostrava\\
Ludv\'{i}ka Pod\'{e}\v{s}t\v{e} 1875/17, 708 33 Ostrava-Poruba,
Czech Republic}
\email{zbynek.urban@vsb.cz; jana.volna@vsb.cz}
\keywords{Lagrangian; Lepage equivalent; Poincar\'e-Cartan form; fundamental
form; calculus of variations; field theory; jet; fibered manifold.}
\subjclass[2000]{\textcolor{black}{58A10; 58A20; 58E30; 70S05}}
\begin{abstract}
A second-order generalization of the fundamental Lepage form of geometric
calculus of variations over fibered manifolds with $2$-dimensional
base is described by means of insisting on (i) equivalence relation
``\emph{Lepage differential $2$-form is closed if and only if the
associated Lagrangian is trivial}'' and, (ii) the principal component
of Lepage form, extending the well-known Poincar\'e\textendash Cartan
form, preserves order prescribed by a~given~Lagrangian. This approach
completes several attempts of finding a Lepage equivalent of a~second-order
Lagrangian possessing condition (i), which is well-known for first-order
Lagrangians in field theory due to Krupka and Betounes.
\end{abstract}

\maketitle

\section{Introduction}

Lepage forms play a~basic role in the calculus of variations
of both simple- and multiple-integral problems over fibered manifolds
and Grassmann fibrations. Among the well-known examples of Lepage
forms we mention namely: the Cartan form of classical mechanics
and its generalization in higher-order mechanics (Krupka \cite{Folia});
in first-order field theory, the Poincar\'{e}\textendash Cartan form
(Garc\'{i}a \cite{Garcia}), the Carath\'{e}odory form (Carath\'{e}odory
\cite{Caratheodory}), and the fundamental Lepage form (also known
as the Krupka\textendash Betounes form) (Krupka \cite{Krupka-FundLepEq},
Betounes \cite{Betounes}); in second-order field theory, the generalized
Poincar\'{e}\textendash Cartan form (Krupka \cite{Krupka-Lepage}),
the generalized Carath\'{e}odory form (Crampin and Saunders \cite{CramSaun2},
Urban and Voln\'{a} \cite{UVcar}), the fundamental Lepage form for
second-order, homogeneous Lagrangians (Saunders and Crampin \cite{Saunders}).
See also Gotay \cite{Gotay}, Goldschimdt and Sternberg \cite{Goldschmidt},
Rund \cite{Rund}, Dedecker \cite{Dedecker}, Hor\'{a}k and Kol\'{a}\v{r}
\cite{HorakKolar}, Krupka \cite{Krupka-Lepage}, Krupka and \v{S}t\v{e}p\'{a}nkov\'{a}
\cite{KruStep}, Saunders \cite{SaunBook}, Sniatycki \cite{Sniatycki}.
Further recent attempts of generalization and study of the fundamental
Lepage equivalent for first- and second-order Lagrangians include
also \cite{Palese,Javier,UrbBra}. For a review of basic properties
and results, see Krupka, Krupkov\'{a}, and Saunders \cite{KKS}. 

Replacing the initial Lagrangian by its Lepage equivalent, the corresponding
variational functional is preserved, and in addition the basic variational
properties as variations, extremals, or conservation laws can be formulated
and studied using geometric operations (such as the exterior derivative,
the Lie derivative) acting on the corresponding Lepage equivalent
of a Lagrangian.

Our aim in this note is to study a generalization of the fundamental
Lepage equivalent $Z_{\lambda}$ of a~\emph{second-order} Lagrangian
$\lambda$. This Lepage equivalent was introduced for \emph{first-order}
Lagrangians in variational theory over fibered manifolds with an $n$-dimensional
base (see \cite{Krupka-FundLepEq,Betounes}), and it obeys the following
crucial property:
\begin{equation}
dZ_{\lambda}=0\quad\mathrm{if\,and\,only\,if}\quad E_{\lambda}=0,\label{eq:Equivalence}
\end{equation}
that is, the Lepage equivalent of a Lagrangian is closed if and only
if the Lagrangian is trivial (i.e., the corresponding Euler\textendash Lagrange
expressions vanish identically). 

For $2$-dimensional base (i.e., two independent variables), we show
that a~Lepage equivalent $Z_\lambda$ of a~second-order Lagrangian $\lambda$, obeying the
aforementioned equivalence property, does \emph{not} exist in general when $Z_\lambda=\Theta_\lambda$ \emph{plus} a~2-contact part, where $\Theta_\lambda$ is the principal component (Lepage form) of $Z_\lambda$. Nevertheless, we describe here the fundamental Lepage form, associated with a~second-order Lagrangian which assures that the principal component
of a~Lepage form (the generalized Poincar\'{e}\textendash Cartan
form) has the \emph{same order} as the initial Lagrangian. This order
reducibility assumption is also motivated by the first-order theory
and include, for instance, important class of Lagrangians \emph{linear}
in second derivatives. 

Recent studies on the fundamental Lepage form include namely Saunders
and Crampin \cite{Saunders} (for two independent variables, generalization
of the fundamental form is given for higher-order, homogeneous Largangians
on tangent bundles), and Palese, Rossi, and Zanello \cite{Palese}
(on the basis of integration by parts, possible generalization of
the fundamental form for a second-order Lagrangian is discussed which,
however, {\em differs} from our result for $n=2$). 

Basic underlying geometric structures, well adapted to the present
paper, can be found in book chapters Voln\'a and Urban \cite{Volna},
and Krupka \cite{Handbook}. Throughout, we use the standard geometric
concepts: the exterior derivative $d$, the contraction $i_{\xi}\rho$
of a differential form $\rho$ with respect to a vector field $\xi$,
and the pull-back operation $*$ acting on differential forms.

If $(U,\varphi)$, $\varphi=(x^{i})$, is a chart on smooth manifold
$X$, the local volume element is denoted by ${\color{black}\omega_{0}}=dx^{1}\wedge\ldots\wedge dx^{n}$,
and we put 
\[
{\color{black}\omega_{j}}{\color{black}=i_{\partial/\partial x^{j}}\omega_{0}=\frac{1}{(n-1)!}\varepsilon_{ji_{2}\ldots i_{n}}dx^{i_{2}}\wedge\ldots\wedge dx^{i_{n}},}
\]
where $\varepsilon_{i_{1}i_{2}\ldots i_{n}}$ is the Levi-Civita permutation
symbol. We denote by $Y$ a~fibered manifold of dimension $n+m$
over an $n$-dimensional base manifold $X$ with projection $\pi:Y\rightarrow X$
the surjective submersion. $J^{r}Y$ denotes the $r$-th order jet
prolongation of $Y$ whose elements are $r$-jets $J_{x}^{r}\gamma$
of sections $\gamma$ of $\pi$ with source at $x\in X$ and target
at $\gamma(x)\in Y$. The canonical jet bundle projection is denoted
by $\pi^{r,s}:J^{r}Y\rightarrow J^{s}Y$. Every fibered chart $(V,\psi)$,
$\psi=(x^{i},y^{\sigma})$, $1\leq i\leq n$, $1\leq\sigma\leq m$,on
$Y$ induces the associated chart $(U,\varphi),\varphi=(x^{i})$ on
$X$, and the associated fibered chart $(V^{r},\psi^{r})$ on $J^{r}Y$,
where $U=\pi(V)$, $V^{r}=(\pi^{r,0})^{-1}(V)$, and $\psi^{r}=(x^{i},y^{\sigma},y_{j}^{\sigma},\ldots,y_{j_{1}\ldots j_{r}}^{\sigma})$,
where
\[
y_{j_{1}\ldots j_{k}}^{\sigma}(J_{x}^{r}\gamma)=D_{j_{1}}\ldots D_{j_{k}}(y^{\sigma}\gamma\varphi^{-1})(\varphi(x)),\quad0\leq k\leq r.
\]
A~tangent vector $\xi\in T_{y}Y$ is called $\pi$\emph{-vertical},
if $T\pi\cdot\xi=0$, and a~differential form $\rho$ on $Y$ is
called $\pi$-\emph{horizontal}, if for every point $y\in Y$ the
contraction $i_{\xi}\rho(y)$ vanishes whenever $\xi\in T_{y}Y$ is
$\pi$-vertical.

We denote by $\Omega_{q}^{r}Y$ the $\Omega_{0}^{r}Y$-module of smooth
differential $q$-forms defined on $J^{r}Y$. $\pi^{r}$-horizontal
$q$-forms on $J^{r}Y$ constitute a~submodule of $\Omega_{q}^{r}Y$,
denoted by $\Omega_{q,X}^{r}Y$. For a~fibered manifold $\pi:Y\rightarrow X$
there exists a~unique morphism $h:\Omega^{r}Y\rightarrow\Omega^{r+1}Y$
of exterior algebras of differential forms such that for any fibered
chart $(V,\psi)$, $\psi=(x^{i},y^{\sigma})$, on $Y$, and any differentiable
function $f:J^{r}Y\rightarrow\mathbb{R}$, 
\[
hf=f\circ\pi^{r+1,r},\quad\quad hdf=(d_{i}f)dx^{i},
\]
where $d_{i}$ (resp. $d_{i}^{\prime}$) is the $i$-th formal derivative
(resp. the cut $i$-th formal derivative) operator associated with
$(V,\psi)$,
\begin{equation*}
d_{i}=d_{i}^{\prime}+\sum_{j_{1}\leq\ldots\leq j_{r}}\frac{\partial}{\partial y_{j_{1}\ldots j_{r}}^{\sigma}}y_{j_{1}\ldots j_{r}i}^{\sigma},\label{eq:FormalDerivative}
\end{equation*}
and
\begin{equation}
d_{i}^{\prime}=\frac{\partial}{\partial x^{i}}+\sum_{k=0}^{r-1}\sum_{j_{1}\leq\ldots\leq j_{k}}\frac{\partial}{\partial y_{j_{1}\ldots j_{k}}^{\sigma}}y_{j_{1}\ldots j_{k}i}^{\sigma}.\label{eq:Cut}
\end{equation}

A~differential form $q$-form $\rho\in\Omega_{q}^{r}Y$ satisfying
$h\rho=0$ is called \emph{contact}, and every contact form $\rho$
is generated by contact $1$-forms
\begin{equation*}
\omega_{j_{1}\ldots j_{s}}^{\sigma}=dy_{j_{1}\ldots j_{s}}^{\sigma}-y_{j_{1}\ldots j_{s}i}^{\sigma}dx^{i},\qquad0\leq s\leq r-1.\label{eq:contact}
\end{equation*}
Any differential $q$-form $\rho\in\Omega_{q}^{r}Y$ has a~unique
invariant decomposition,
\[
(\pi^{r+1,r})^{*}\rho=h\rho+\sum_{k=1}^{q}p_{k}\rho,
\]
where $p_{k}\rho$ is the $k$-\emph{contact component} of $\rho$,
containing exactly $k$ exterior product factors $\omega_{j_{1}\ldots j_{s}}^{\sigma}$
with respect to any fibered chart $(V,\psi)$.
 
\section{Lepage equivalents in field theory}

We summarize basic facts about Lepage differential forms on finite-order
jet prolongations of fibered manifolds and, in particular, we discuss
distinguished examples of Lepage equivalents of first- and second-order
Lagrangians; for more details see \cite{Krupka-Lepage,Handbook,Krupka-Book,Volna,UrbBra,UVcar}. 

By a~\textit{Lagrangian} $\lambda$ for a~fibered manifold $\pi:Y\rightarrow X$
of order $r$ we mean an element of the submodule $\Omega_{n,X}^{r}Y$
of $\pi^{r}$-horizontal $n$-forms in the module of $n$-forms $\Omega_{n}^{r}Y$,
defined on the $r$-th jet prolongation $J^{r}Y$. In a~fibered chart
$(V,\psi)$, $\psi=(x^{i},y^{\sigma})$, Lagrangian $\lambda\in\Omega_{n,X}^{r}Y$
has an expression
\begin{equation}
\lambda=\mathscr{L}\omega_{0},\label{eq:Lagrangian}
\end{equation}
where $\omega_{0}=dx^{1}\wedge\ldots\wedge dx^{n}$ is the (local)
volume element, and $\mathscr{L}:V^{r}\rightarrow\mathbb{R}$ is said
to be the \textit{Lagrange function} associated to $\lambda$ and
$(V,\psi)$.

An $n$-form $\rho\in\Omega_{n}^{s}Y$ on $J^{s}Y$ is called a\textit{~Lepage
form,} if one of the following equivalent conditions is satisfied: 

(i) $p_{1}d\rho$ is a $\pi^{s+1,0}$-horizontal $(n+1)$-form, 

(ii) $hi_{\xi}d\rho=0$ for arbitrary $\pi^{s,0}$-vertical vector
field $\xi$ on $J^{s}Y$,

(iii) For every fibered chart $(V,\psi)$ on $Y$, $\rho$ satisfies
\[
(\pi^{s+1,s})^{*}\rho=f_{0}\omega_{0}+\sum_{k=0}^{s}f_{\sigma}^{i,j_{1}\ldots j_{k}}\omega_{j_{1}\ldots j_{k}}^{\sigma}\wedge\omega_{i}+\eta,
\]
where $n$-form $\eta$ has order of contactness $\geq2$, and
\begin{align*}
\frac{\partial f_{0}}{\partial y_{j_{1}\ldots j_{k}}^{\sigma}}-d_{i}f_{\sigma}^{i,j_{1}\ldots j_{k}}-f_{\sigma}^{j_{k},j_{1}\ldots j_{k-1}} & =0\qquad\mathrm{Sym}(j_{1}\ldots j_{k}),\,\,k\leq s,\\
\frac{\partial f_{0}}{\partial y_{j_{1}\ldots j_{s+1}}^{\sigma}}-f_{\sigma}^{j_{s+1},j_{1}\ldots j_{s}} & =0\qquad\mathrm{Sym}(j_{1}\ldots j_{s+1}).
\end{align*}

Let $\lambda\in\Omega_{n,X}^{r}Y$ be a~Lagrangian for $\pi:Y\rightarrow X$.
A~Lepage form $\rho\in\Omega_{n}^{s}Y$ is called a~\emph{Lepage
equivalent }of $\lambda$, if $h\rho=\lambda$ (up to a~canonical
jet projection). The following theorem describes the structure of
Lepage equivalents of a~Lagrangian.
\begin{thm}
\label{Thm:LepageEquiv}Let $\lambda\in\Omega_{n,X}^{r}Y$ be a~Lagrangian
of order $r$ for $\pi:Y\rightarrow X$, locally expressed by \eqref{eq:Lagrangian}
with respect to a~fibered chart $(V,\psi)$. An $n$-form $\rho\in\Omega_{n}^{s}Y$
is a~Lepage equivalent of $\lambda$ if and only if it obeys the
following decomposition,
\begin{equation}
(\pi^{s+1,s})^{*}\rho=\Theta_{\lambda}+d\mu+\eta,\label{eq:LepDecomp}
\end{equation}
where $n$-form $\Theta_{\lambda}$ is defined on $V^{2r-1}$ by
\begin{equation}
\Theta_{\lambda}=\mathscr{L}\omega_{0}+\sum_{k=0}^{r-1}\left(\sum_{l=0}^{r-1-k}(-1)^{l}d_{p_{1}}\ldots d_{p_{l}}\frac{\partial\mathscr{L}}{\partial y_{j_{1}\ldots j_{k}p_{1}\ldots p_{l}i}^{\sigma}}\right)\omega_{j_{1}\ldots j_{k}}^{\sigma}\wedge\omega_{i},\label{eq:PoiCar}
\end{equation}
$\mu$ is a~contact $(n-1)$-form, and an $n$-form $\eta$ has the
order of contactness $\geq2$.
\end{thm}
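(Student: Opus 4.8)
\emph{Outline of a proof.} The plan is to single out $\Theta_{\lambda}$ of \eqref{eq:PoiCar} as one distinguished Lepage equivalent of $\lambda$ and then to reduce the theorem to a normal-form statement for contact Lepage $n$-forms. First I would check that $\Theta_{\lambda}$ is itself a Lepage equivalent of $\lambda$. Since each contact summand of $\Theta_{\lambda}$ carries a factor $\omega_{j_{1}\ldots j_{k}}^{\sigma}$ and $h(\omega_{j_{1}\ldots j_{k}}^{\sigma})=0$, one gets $h\Theta_{\lambda}=\mathscr{L}\omega_{0}=\lambda$ up to the canonical projection. To see that $\Theta_{\lambda}$ is a Lepage form I would use the local characterization (iii): writing $f_{0}=\mathscr{L}$ and letting $f_{\sigma}^{i,j_{1}\ldots j_{k}}$ be the bracketed coefficients of \eqref{eq:PoiCar} (with $f_{\sigma}^{i,j_{1}\ldots j_{k}}=0$ for $k\geq r$), substitute into the relations $\partial f_{0}/\partial y_{j_{1}\ldots j_{k}}^{\sigma}-d_{i}f_{\sigma}^{i,j_{1}\ldots j_{k}}-f_{\sigma}^{j_{k},j_{1}\ldots j_{k-1}}=0$; because the formal derivatives commute and may be reindexed freely inside symmetric multi-indices, the three terms telescope, the relation at $k=r$ collapsing to $\partial\mathscr{L}/\partial y_{j_{1}\ldots j_{r}}^{\sigma}-\partial\mathscr{L}/\partial y_{j_{1}\ldots j_{r}}^{\sigma}=0$ and the relations with $k>r$ being trivial. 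This is the one genuinely computational ingredient, and I would record it as a lemma.

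For the ``if'' direction, assume $(\pi^{s+1,s})^{*}\rho=\Theta_{\lambda}+d\mu+\eta$ as in \eqref{eq:LepDecomp}. Applying $h$: in every term of $d\mu$ a contact factor survives, because $h(\omega_{j_{1}\ldots j_{k}}^{\sigma})=0$ and $h(d\omega_{j_{1}\ldots j_{k}}^{\sigma})=-y_{j_{1}\ldots j_{k}pq}^{\sigma}\,dx^{q}\wedge dx^{p}=0$ by symmetry of the repeated total derivatives, so $hd\mu=0$, while $h\eta=0$; hence $h\rho=h\Theta_{\lambda}=\lambda$. For the Lepage property, $p_{1}d\rho=p_{1}d\Theta_{\lambda}+p_{1}d\eta$, where the first summand is $\pi^{\cdot,0}$-horizontal by the previous paragraph and $p_{1}d\eta=0$ since the exterior derivative does not lower the order of contactness, so $d\eta$ is still of contactness $\geq2$. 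Thus $p_{1}d\rho$ is horizontal and $\rho$ is a Lepage equivalent of $\lambda$.

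For the ``only if'' direction, let $\rho$ be a Lepage equivalent of $\lambda$ and let $\alpha$ be the difference of the pullbacks of $\rho$ and of $\Theta_{\lambda}$ to a common jet prolongation. Then $h\alpha=\lambda-\lambda=0$, so $\alpha$ is contact, and $p_{1}d\alpha=p_{1}d\rho-p_{1}d\Theta_{\lambda}$ is $\pi^{\cdot,0}$-horizontal; i.e.\ $\alpha$ is a \emph{contact} Lepage $n$-form. The theorem thus reduces to the claim that every contact Lepage $n$-form $\alpha$ decomposes as $\alpha=d\mu+\eta$ with $\mu$ a contact $(n-1)$-form and $\eta$ of contactness $\geq2$. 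Writing $\alpha=p_{1}\alpha+\beta$ with $\beta$ of contactness $\geq2$, it suffices to find a contact $(n-1)$-form $\mu$ with $p_{1}d\mu=p_{1}\alpha$, for then $\eta:=\alpha-d\mu$ satisfies $h\eta=-hd\mu=0$ and $p_{1}\eta=p_{1}\alpha-p_{1}d\mu=0$. In a fibered chart, condition (iii) applied to $\alpha$ (with $f_{0}=0$) says precisely that the complete symmetrization of the top coefficient of $p_{1}\alpha$ vanishes and that each lower coefficient is the formal divergence of the next. Making the corresponding ansatz for $\mu$ in terms of the contact forms $\omega_{j_{1}\ldots j_{k}}^{\sigma}$ wedged with contractions of $\omega_{0}$ (plus terms of contactness $\geq2$), and computing $p_{1}d\mu$ by means of $d\omega_{j_{1}\ldots j_{k}}^{\sigma}=-\omega_{j_{1}\ldots j_{k}p}^{\sigma}\wedge dx^{p}$ together with the usual contraction identities for the $\omega_{j}$, one determines the coefficients of $\mu$ by descending induction on the order of their contact factor: the highest-order equation is solvable exactly because of the vanishing symmetrization, and the remaining equations — the lowest one included — are then forced by the divergence relations. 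This produces $\mu$, whence $(\pi^{s+1,s})^{*}\rho=\Theta_{\lambda}+d\mu+\eta$, as required.

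I expect the coefficient bookkeeping in this last reduction to be the main obstacle. Conceptually it amounts to the vanishing of the interior-Euler (source-form) obstruction of $p_{1}\alpha$ for a contact Lepage form, equivalently to $p_{1}\alpha$ lying in the image of the operator $p_{1}d$ restricted to contact $(n-1)$-forms; an explicit homotopy operator for $p_{1}d$ would make the argument transparent. The remaining points are routine, apart from the need to keep careful track of the jet orders and of the pullbacks $(\pi^{r+1,r})^{*}$ throughout.
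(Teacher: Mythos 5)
Your outline is sound and is essentially the argument the paper delegates to its references: the paper gives no proof beyond ``see \cite{Krupka-Book,Volna}'', and the proof found there runs along your lines — check that $\Theta_{\lambda}$ satisfies the coefficient identities of characterization (iii) (your telescoping computation, with the relation at $k=r$ collapsing, is the right one), note that $hd\mu=0$ and $p_{1}d\eta=0$ so the ``if'' direction is immediate, and reduce the ``only if'' direction to the statement that a contact Lepage $n$-form $\alpha$ equals $d\mu+\eta$ with $\mu$ contact and $\eta$ of contactness $\geq2$. The only place where your write-up is a sketch rather than a proof is exactly that last statement, i.e. the existence of a contact $(n-1)$-form $\mu$ with $p_{1}d\mu=p_{1}\alpha$, and it is slightly more delicate than ``the remaining equations are forced'': writing $\mu=\sum_{k}g_{\sigma}^{pq,j_{1}\ldots j_{k}}\omega_{j_{1}\ldots j_{k}}^{\sigma}\wedge i_{\partial/\partial x^{q}}i_{\partial/\partial x^{p}}\omega_{0}$, the descending induction requires at each level (a) the combinatorial lemma that a coefficient $f_{\sigma}^{i,j_{1}\ldots j_{k}}$, symmetric in $j_{1}\ldots j_{k}$ and with vanishing total symmetrization in $(i,j_{1}\ldots j_{k})$, can be produced by the pair-antisymmetric coefficients $g_{\sigma}^{pq,j_{1}\ldots j_{k-1}}$, and (b) a verification that the symmetrization obstruction at the next lower level vanishes, which uses the Lepage relations $\mathrm{Sym}\bigl(f_{\sigma}^{j_{k},j_{1}\ldots j_{k-1}}+d_{i}f_{\sigma}^{i,j_{1}\ldots j_{k}}\bigr)=0$ together with the divergences of the $g$'s already chosen; only the bottom equation is then satisfied identically (as your own homotopy-operator remark anticipates, and as one can confirm in the lowest-order case $n=2$, $s=1$). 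With that lemma carried out, your proof is complete and coincides in substance with the one in the cited sources; nothing in your plan would fail.
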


\begin{proof}
See \cite{Krupka-Book,Volna}.
\end{proof}
$\Theta_{\lambda}$, given by \eqref{eq:PoiCar} on $V^{2r-1}$, is
called the \emph{principal} \emph{Lepage equivalent} of $\lambda$
with respect to fibered chart $(V,\psi)$. This~Lepage form is uniquelly
determined by imposing that a~Lepage form is $\pi^{2r-1,r-1}$-horizontal
and it has the order of contactness \textit{$\leq1$.}\textit{\emph{
We n}}ote that, in general, decomposition \eqref{eq:LepDecomp} is
\emph{not} uniquely determined with respect to contact forms $\mu$
and $\eta$, although the Lepage equivalent $\rho$ satisfying \eqref{eq:LepDecomp}
is a~globally defined differential form on $J^{s}Y$. For a~first-order
Lagrangian $\lambda$, $\Theta_{\lambda}$ \eqref{eq:PoiCar}
is the well-known \emph{Poincar\'e\textendash Cartan form} defined
on $J^{1}Y$ (cf. \cite{Garcia}),
\begin{equation}
\Theta_{\lambda}=\mathscr{L}\omega_{0}+\frac{\partial\mathscr{L}}{\partial y_{j}^{\sigma}}\omega^{\sigma}\wedge\omega_{j}.\label{eq:Poincare-Cartan}
\end{equation}
 For a~second-order Lagrangian $\lambda$, $\Theta_{\lambda}$
\eqref{eq:PoiCar} is the~\emph{generalized Poincar\'e\textendash Cartan
form} defined on $J^{3}Y$ (cf. \cite{Krupka-Lepage,Handbook}),
\begin{equation}
\Theta_{\lambda}=\mathscr{L}\omega_{0}+\left(\frac{\partial\mathscr{L}}{\partial y_{j}^{\sigma}}-d_{i}\frac{\partial\mathscr{L}}{\partial y_{ij}^{\sigma}}\right)\omega^{\sigma}\wedge\omega_{j}+\frac{\partial\mathscr{L}}{\partial y_{ij}^{\sigma}}\omega_{i}^{\sigma}\wedge\omega_{j}.\label{eq:Poincare-Cartan-2ndOrder}
\end{equation}
We point out that for Lagrangians of order $r\geq3$, local expressions
\eqref{eq:PoiCar} need \emph{not} define differential forms on $J^{2r-1}Y$
globally (cf. \cite{HorakKolar,Krupka-Lepage}).

The well-known \emph{Euler\textendash Lagrange mapping} of the calculus
of variations assigns to a~Lagrangian $\lambda\in\Omega_{n,X}^{r}Y$
the \emph{Euler\textendash Lagrange form
\begin{equation}
E_{\lambda}=E_{\sigma}(\mathscr{L})\omega^{\sigma}\wedge\omega_{0},\label{eq:E-L--form}
\end{equation}
}with coefficients
\begin{equation}
E_{\sigma}(\mathscr{L})=\sum_{k=0}^{r}(-1)^{k}d_{i_{1}}\ldots d_{i_{k}}\frac{\partial\mathscr{L}}{\partial y_{i_{1}\ldots i_{k}}^{\sigma}}\label{eq:E-L--expressions}
\end{equation}
the \emph{Euler\textendash Lagrange expressions} associated to $\mathscr{L}:V^{r}\rightarrow\mathbb{R}$.
Note that the $1$-contact and $\pi^{r,0}$-horizontal $(n+1)$-form
$E_{\lambda}$ is defined by means of \eqref{eq:E-L--form}, \eqref{eq:E-L--expressions}
on $J^{2r}Y$ globally. The following theorem explains a~crucial
relation between a~Lepage equivalent of Lagrangian $\lambda$ on
one side and the associated Euler\textendash Lagrange form $E_{\lambda}$
on the other side.
\begin{thm}
\label{Thm:Lepage--ELform}Let $\lambda\in\Omega_{n,X}^{r}Y$ be a~Lagrangian
of order $r$ for $\pi:Y\rightarrow X$ and let $\rho\in\Omega_{n}^{s}Y$
be a~Lepage equivalent of $\lambda$. Then
\begin{equation*}
(\pi^{s+1,s})^{*}d\rho=E_{\lambda}+F,\label{eq:Lep-EL}
\end{equation*}
where $E_{\lambda}$ is the Euler\textendash Lagrange form \eqref{eq:E-L--form}
associated to $\lambda$, and $F$ is an $(n+1)$-form with order
of contactness $\geq2$. In particular , $E_{\lambda}$ coincides
with the $1$-contact component of the exterior derivative of a~Lepage
equivalent of Lagrangian $\lambda$, i.e. on $J^{2r}Y$,
\begin{equation}
E_{\lambda}=p_{1}d\rho.\label{eq:E-PD}
\end{equation}
\end{thm}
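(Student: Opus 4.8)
The plan is to reduce the claim to computing the first-contact component $p_{1}\,d\rho$ and identifying it, in a fibered chart, with the Euler--Lagrange form; the remaining contact components of $d\rho$ then constitute $F$.

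First I would dispose of the horizontal part: $d\rho$ has degree $n+1$ while $X$ is $n$-dimensional, so $h\,d\rho=0$ automatically. Next I would invoke Theorem~\ref{Thm:LepageEquiv}: with respect to a fibered chart $(V,\psi)$ one has $(\pi^{s+1,s})^{*}\rho=\Theta_{\lambda}+d\mu+\eta$ with $\Theta_{\lambda}$ given by~\eqref{eq:PoiCar}, $\mu$ a contact $(n-1)$-form, and $\eta$ of order of contactness $\geq2$. Hence $(\pi^{s+1,s})^{*}d\rho=d\Theta_{\lambda}+d\eta$, because $d\,d\mu=0$; and since $\eta$ is generated by products of at least two contact $1$-forms while $d\omega_{j_{1}\ldots j_{k}}^{\sigma}=-\,\omega_{j_{1}\ldots j_{k}p}^{\sigma}\wedge dx^{p}$ is again $1$-contact, the form $d\eta$ has order of contactness $\geq2$, so $p_{1}\,d\eta=0$. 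Therefore $p_{1}\,d\rho=p_{1}\,d\Theta_{\lambda}$, and everything comes down to $p_{1}\,d\Theta_{\lambda}$.

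Second, I would differentiate~\eqref{eq:PoiCar}, writing $\Theta_{\lambda}=\mathscr{L}\omega_{0}+\sum_{k=0}^{r-1}P_{\sigma}^{j_{1}\ldots j_{k},i}\,\omega_{j_{1}\ldots j_{k}}^{\sigma}\wedge\omega_{i}$ with $P_{\sigma}^{j_{1}\ldots j_{k},i}=\sum_{l=0}^{r-1-k}(-1)^{l}d_{p_{1}}\ldots d_{p_{l}}\,\partial\mathscr{L}/\partial y_{j_{1}\ldots j_{k}p_{1}\ldots p_{l}i}^{\sigma}$, and extract the $1$-contact part using $dy_{j_{1}\ldots j_{k}}^{\sigma}=\omega_{j_{1}\ldots j_{k}}^{\sigma}+y_{j_{1}\ldots j_{k}p}^{\sigma}\,dx^{p}$, $dx^{p}\wedge\omega_{i}=\delta_{i}^{p}\,\omega_{0}$, $d\omega_{i}=0$, and $d\omega_{j_{1}\ldots j_{k}}^{\sigma}=-\,\omega_{j_{1}\ldots j_{k}p}^{\sigma}\wedge dx^{p}$. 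The coefficient of $\omega_{j_{1}\ldots j_{m}}^{\sigma}\wedge\omega_{0}$ in $p_{1}\,d\Theta_{\lambda}$ comes out, for $m\geq1$, as
\[
\frac{\partial\mathscr{L}}{\partial y_{j_{1}\ldots j_{m}}^{\sigma}}-d_{i}P_{\sigma}^{j_{1}\ldots j_{m},i}-P_{\sigma}^{j_{1}\ldots j_{m-1},j_{m}},
\]
and, for $m=0$, as $\partial\mathscr{L}/\partial y^{\sigma}-d_{i}P_{\sigma}^{i}$. Splitting the $l=0$ term off the definition of $P$ shows $P_{\sigma}^{j_{1}\ldots j_{m-1},j_{m}}=\partial\mathscr{L}/\partial y_{j_{1}\ldots j_{m}}^{\sigma}-d_{i}P_{\sigma}^{j_{1}\ldots j_{m},i}$ (equivalently, this is the recurrence of condition (iii) in the definition of a Lepage form, here applied to $\Theta_{\lambda}$), so every coefficient with $m\geq1$ vanishes and $p_{1}\,d\Theta_{\lambda}$ is $\pi^{s+1,0}$-horizontal; the $m=0$ coefficient telescopes, on inserting $P_{\sigma}^{i}=\sum_{l=0}^{r-1}(-1)^{l}d_{p_{1}}\ldots d_{p_{l}}\,\partial\mathscr{L}/\partial y_{p_{1}\ldots p_{l}i}^{\sigma}$ and distributing the remaining $d_{i}$, to $\sum_{k=0}^{r}(-1)^{k}d_{i_{1}}\ldots d_{i_{k}}\,\partial\mathscr{L}/\partial y_{i_{1}\ldots i_{k}}^{\sigma}=E_{\sigma}(\mathscr{L})$. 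Thus $p_{1}\,d\rho=E_{\sigma}(\mathscr{L})\,\omega^{\sigma}\wedge\omega_{0}=E_{\lambda}$.

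Finally, setting $F:=(\pi^{s+1,s})^{*}d\rho-E_{\lambda}=\sum_{k\geq2}p_{k}\,d\rho$, which has order of contactness $\geq2$ by construction, yields the displayed decomposition, and since $E_{\lambda}$ and $p_{1}\,d\rho$ are globally defined on $J^{2r}Y$ the identity~\eqref{eq:E-PD} follows. The main obstacle I anticipate is bookkeeping: correctly tracking the signs and the symmetrization over the multi-indices $j_{1}\ldots j_{m}$ when extracting the $1$-contact part, and then verifying the telescoping that collapses the $m=0$ term onto the Euler--Lagrange expression --- a pattern already visible in the first- and second-order cases~\eqref{eq:Poincare-Cartan} and~\eqref{eq:Poincare-Cartan-2ndOrder}. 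One could equally argue from the normal form of a Lepage form in condition (iii) directly, in which case the recurrence relations listed there do the same job.
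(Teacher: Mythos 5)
Your argument is correct: the paper itself proves this theorem only by citation to \cite{Handbook,Krupka-Book,Volna}, and what you reconstruct is precisely the standard argument found there --- reduce via the decomposition \eqref{eq:LepDecomp} to $p_{1}d\Theta_{\lambda}$ (using $h\,d\rho=0$ in degree $n+1$, $d\,d\mu=0$, and $p_{1}d\eta=0$ for $\eta$ of order of contactness $\geq2$), then telescope the coefficients of $\omega^{\sigma}_{j_{1}\ldots j_{m}}\wedge\omega_{0}$ using the recurrence satisfied by the coefficients of \eqref{eq:PoiCar}, which kills all terms with $m\geq1$ and collapses the $m=0$ term to $E_{\sigma}(\mathscr{L})$. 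The symmetrization bookkeeping you flag is harmless since $\omega^{\sigma}_{j_{1}\ldots j_{m}}$ is symmetric in its indices, and globality of \eqref{eq:E-PD} follows, as you say, from the chart-wise identity together with the global character of $p_{1}d\rho$ and $E_{\lambda}$.
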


\begin{proof}
See \cite{Handbook,Krupka-Book,Volna}.
\end{proof}
Beside the principal Lepage equivalent $\Theta_{\lambda}$, given
by \eqref{eq:Poincare-Cartan} and \eqref{eq:Poincare-Cartan-2ndOrder}
for a~first- and second-order Lagrangian $\lambda$, respectively,
we recall the other known examples of Lepage equivalents, determined
by means of additional requirements.
\begin{lem}
\textbf{\textup{\label{Lem:Caratheodory} }}\emph{(a)} Let $\lambda\in\Omega_{n,X}^{1}Y$
be a~non-vanishing first-order Lagrangian for $\pi:Y\rightarrow X$,
locally expressed by \eqref{eq:Lagrangian}. Then the local expression
\begin{align}
\Lambda_{\lambda} & =\frac{1}{\mathscr{L}^{n-1}}\bigwedge_{j=1}^{n}\left(\mathscr{L}dx^{j}+\frac{\partial\mathscr{L}}{\partial y_{j}^{\sigma}}\omega^{\sigma}\right)\label{eq:CaratheodoryForm}
\end{align}
defines a~$\pi^{1,0}$-horizontal differential $n$-form $\Lambda_{\lambda}\in\Omega_{n}^{1}Y$,
which is a Lepage equivalent of $\lambda$.

\emph{(b)} Let $\lambda\in\Omega_{n,X}^{2}Y$ be a~non-vanishing
second-order Lagrangian for $\pi:Y\rightarrow X$, locally expressed
by \eqref{eq:Lagrangian}. Then the local expression
\begin{align}
\Lambda_{\lambda} & =\frac{1}{\mathscr{L}^{n-1}}\bigwedge_{j=1}^{n}\left(\mathscr{L}dx^{j}+\left(\frac{\partial\mathscr{L}}{\partial y_{j}^{\sigma}}-d_{i}\frac{\partial\mathscr{L}}{\partial y_{ij}^{\sigma}}\right)\omega^{\sigma}+\frac{\partial\mathscr{L}}{\partial y_{ij}^{\sigma}}\omega_{i}^{\sigma}\right)\label{eq:2ndCaratheodoryExpression}
\end{align}
defines a~$\pi^{3,1}$-horizontal differential $n$-form $\Lambda_{\lambda}\in\Omega_{n}^{3}Y$,
which is a Lepage equivalent of $\lambda$. 
\end{lem}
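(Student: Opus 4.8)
The plan is to handle both parts by the same device. Each factor of the wedge product in \eqref{eq:CaratheodoryForm}, resp.\ \eqref{eq:2ndCaratheodoryExpression}, is a $1$-form of the shape $\theta^{j}=\mathscr{L}\,dx^{j}+(\textrm{contact})$; I would expand $\bigwedge_{j=1}^{n}\theta^{j}$ according to the number of contact factors, identify its $0$- and $1$-contact parts with the principal Lepage equivalent $\Theta_{\lambda}$ of \eqref{eq:PoiCar}, and then apply Theorem~\ref{Thm:LepageEquiv} with $\mu=0$.

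For (a), put $\theta^{j}=\mathscr{L}\,dx^{j}+(\partial\mathscr{L}/\partial y_{j}^{\sigma})\,\omega^{\sigma}$. First I would note that each $\theta^{j}$ is $\pi^{1,0}$-horizontal ($\mathscr{L}$ and $\partial\mathscr{L}/\partial y_{j}^{\sigma}$ are functions on $V^{1}$ and $\omega^{\sigma}=dy^{\sigma}-y_{k}^{\sigma}dx^{k}$ carries no $dy_{k}^{\sigma}$), hence so is $\Lambda_{\lambda}=\mathscr{L}^{-(n-1)}\bigwedge_{j}\theta^{j}$, which is well defined since $\mathscr{L}\neq0$. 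Expanding the product multilinearly in the splitting $\theta^{j}=\mathscr{L}\,dx^{j}+(\textrm{contact})$: the contact-free term is $\mathscr{L}^{n}\omega_{0}$; a one-contact term arises by replacing one factor $\mathscr{L}\,dx^{k}$ by $(\partial\mathscr{L}/\partial y_{k}^{\sigma})\omega^{\sigma}$, and since the sign $(-1)^{k-1}$ incurred in moving this contact $1$-form to the front cancels the sign in $dx^{1}\wedge\dots\wedge\widehat{dx^{k}}\wedge\dots\wedge dx^{n}=(-1)^{k-1}\omega_{k}$, the one-contact part equals $\mathscr{L}^{n-1}(\partial\mathscr{L}/\partial y_{k}^{\sigma})\,\omega^{\sigma}\wedge\omega_{k}$; all remaining terms carry $\geq2$ contact factors. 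Dividing by $\mathscr{L}^{n-1}$ gives $\Lambda_{\lambda}=\mathscr{L}\omega_{0}+(\partial\mathscr{L}/\partial y_{j}^{\sigma})\,\omega^{\sigma}\wedge\omega_{j}+\eta=\Theta_{\lambda}+\eta$, with $\Theta_{\lambda}$ the Poincar\'e--Cartan form \eqref{eq:Poincare-Cartan} and $\eta$ of contactness $\geq2$. Theorem~\ref{Thm:LepageEquiv} (with $r=s=1$, $\mu=0$) then gives that $\Lambda_{\lambda}$ is a Lepage equivalent of $\lambda$, and in particular $h\Lambda_{\lambda}=\lambda$.

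For (b) I would run the identical argument with $\theta^{j}=\mathscr{L}\,dx^{j}+(\partial\mathscr{L}/\partial y_{j}^{\sigma}-d_{i}\partial\mathscr{L}/\partial y_{ij}^{\sigma})\,\omega^{\sigma}+(\partial\mathscr{L}/\partial y_{ij}^{\sigma})\,\omega_{i}^{\sigma}$, which is $\pi^{3,1}$-horizontal on $V^{3}$ (the coefficient $d_{i}\partial\mathscr{L}/\partial y_{ij}^{\sigma}$ is a function on $V^{3}$, and neither $\omega^{\sigma}$ nor $\omega_{i}^{\sigma}=dy_{i}^{\sigma}-y_{ik}^{\sigma}dx^{k}$ carries $dy_{jk}^{\sigma}$). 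The same expansion shows that the $0$- and $1$-contact parts of $\mathscr{L}^{-(n-1)}\bigwedge_{j}\theta^{j}$ reproduce exactly the generalized Poincar\'e--Cartan form \eqref{eq:Poincare-Cartan-2ndOrder}, the rest being of contactness $\geq2$, and Theorem~\ref{Thm:LepageEquiv} (now $r=2$, $s=3$, $\mu=0$) again applies.

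The main obstacle is the remaining claim that \eqref{eq:CaratheodoryForm}, \eqref{eq:2ndCaratheodoryExpression} are independent of the fibered chart, so that $\Lambda_{\lambda}$ is a globally defined $n$-form on $J^{1}Y$, resp.\ $J^{3}Y$. I would prove this by checking that under a fibered coordinate change the factors rescale so that $\bigwedge_{j}\bar\theta^{j}=J^{-(n-1)}\bigwedge_{j}\theta^{j}$ with $J=\det(\partial\bar x^{i}/\partial x^{k})$ -- using $d\bar x^{i}=(\partial\bar x^{i}/\partial x^{k})\,dx^{k}$, $\bar\omega^{\sigma}=(\partial\bar y^{\sigma}/\partial y^{\nu})\,\omega^{\nu}$ (and the analogous rule for $\bar\omega_{i}^{\sigma}$ in case (b)), $\bar{\mathscr{L}}=J^{-1}\mathscr{L}$, and the transformation laws of the contact coefficients -- and then combining with $\bar{\mathscr{L}}^{n-1}=J^{-(n-1)}\mathscr{L}^{n-1}$ to obtain $\bar\Lambda_{\lambda}=\Lambda_{\lambda}$; equivalently one may argue that $\theta^{1},\dots,\theta^{n}$ and $\bar\theta^{1},\dots,\bar\theta^{n}$ span the same $n$-dimensional cotangent subspace pointwise, so the two decomposable $n$-forms agree up to the scalar read off from their $\omega_{0}$-terms. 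The bookkeeping is heavier in case (b) because the coefficients involve the formal derivative $d_{i}$, whose transformation must be tracked; what makes it go through is that the resulting $1$-contact part is precisely that of the globally defined form \eqref{eq:Poincare-Cartan-2ndOrder}. Since this chart-independence is classical, one may alternatively just cite the references indicated in the text.
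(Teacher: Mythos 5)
Your proposal is correct in substance, but it takes a genuinely different route from the paper, whose ``proof'' of this lemma consists solely of the citation to \cite{Caratheodory} for (a) and \cite{UVcar} for (b). Your local argument --- expanding $\bigwedge_{j}\theta^{j}$ by the number of contact factors, noting that the sign $(-1)^{k-1}$ from moving the contact factor to the front cancels against $dx^{1}\wedge\ldots\wedge\widehat{dx^{k}}\wedge\ldots\wedge dx^{n}=(-1)^{k-1}\omega_{k}$, identifying the $0$- and $1$-contact parts with \eqref{eq:Poincare-Cartan}, resp.\ \eqref{eq:Poincare-Cartan-2ndOrder}, and then applying Theorem~\ref{Thm:LepageEquiv} with $\mu=0$ --- is complete and is exactly the decomposition the paper itself records afterwards in \eqref{eq:Carat-n2} for $n=2$; the horizontality checks are likewise fine, and this buys an actual proof of the Lepage property where the paper only points to the literature. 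The one soft spot is the chart-independence step. For (a) your scaling claim is easily verified: a short chain-rule computation gives $\bar\theta^{i}=J^{-1}\frac{\partial\bar x^{i}}{\partial x^{k}}\,\theta^{k}$, hence $\bigwedge_{j}\bar\theta^{j}=J^{-(n-1)}\bigwedge_{j}\theta^{j}$ and $\bar\Lambda_{\lambda}=\Lambda_{\lambda}$. For (b), however, the heuristic that ``the $1$-contact part is the globally defined form \eqref{eq:Poincare-Cartan-2ndOrder}'' does not by itself yield invariance of the whole decomposable form (two $n$-forms with equal $0$- and $1$-contact parts may differ in their $2$-contact parts), and the span-equality alternative is not free either; one really must verify the factor transformation $\bar\theta^{i}=J^{-1}\frac{\partial\bar x^{i}}{\partial x^{k}}\,\theta^{k}$ using the transformation laws of $\partial\mathscr{L}/\partial y_{ij}^{\sigma}$, of $d_{i}\partial\mathscr{L}/\partial y_{ij}^{\sigma}$, and of $\bar\omega_{i}^{\sigma}$ (which mixes $\omega^{\nu}$ and $\omega_{k}^{\nu}$) --- that computation is precisely the content of \cite{UVcar}. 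Since you explicitly allow citing the references for this step, your argument meets, and in the local part exceeds, the paper's own standard.
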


\begin{proof}
See \cite{Caratheodory,UVcar}.
\end{proof}
$\Lambda_{\lambda}$ \eqref{eq:CaratheodoryForm} is the well-known
\emph{Carath\'eodory form, }associated to~a~non-vanishing, first-order
Lagrangian\emph{ $\lambda$ }(cf. \cite{Caratheodory})\emph{,} whereas\emph{
}$\Lambda_{\lambda}$ \eqref{eq:2ndCaratheodoryExpression} is its
generalization for second-order Lagrangians, recently studied in \cite{UVcar}.
\begin{rem}
Note that the Carath\'eodory form $\Lambda_{\lambda}$ \eqref{eq:2ndCaratheodoryExpression}
is decomposed as a~sum of the generalized Poincar\'e\textendash Cartan
form $\Theta_{\lambda}$ \eqref{eq:Poincare-Cartan-2ndOrder} and
a~$\pi^{3,1}$-horizontal, $2$-contact differential $n$-form. For
further purpose, we give this decomposition explicitly for the dimension
of base $n=2$:
\begin{align}
\Lambda_{\lambda} & =\Theta_{\lambda}\nonumber \\
 & +\frac{1}{\mathscr{L}}\left(\frac{\partial\mathscr{L}}{\partial y_{1}^{\sigma}}-d_{i}\frac{\partial\mathscr{L}}{\partial y_{i1}^{\sigma}}\right)\left(\frac{\partial\mathscr{L}}{\partial y_{2}^{\nu}}-d_{k}\frac{\partial\mathscr{L}}{\partial y_{k2}^{\nu}}\right)\omega^{\sigma}\wedge\omega^{\nu}\nonumber \\
 & +\frac{1}{\mathscr{L}}\left(\frac{\partial\mathscr{L}}{\partial y_{j2}^{\nu}}\left(\frac{\partial\mathscr{L}}{\partial y_{1}^{\sigma}}-d_{i}\frac{\partial\mathscr{L}}{\partial y_{i1}^{\sigma}}\right)-\frac{\partial\mathscr{L}}{\partial y_{j1}^{\nu}}\left(\frac{\partial\mathscr{L}}{\partial y_{2}^{\sigma}}-d_{i}\frac{\partial\mathscr{L}}{\partial y_{i2}^{\sigma}}\right)\right)\omega^{\sigma}\wedge\omega_{j}^{\nu}\label{eq:Carat-n2}\\
 & +\frac{1}{\mathscr{L}}\frac{\partial\mathscr{L}}{\partial y_{i1}^{\sigma}}\frac{\partial\mathscr{L}}{\partial y_{j2}^{\nu}}\omega_{i}^{\sigma}\wedge\omega_{j}^{\nu}.\nonumber 
\end{align}
\end{rem}

\begin{lem}
\textbf{\textup{\label{Lem:Fundamental}}} Let $\lambda\in\Omega_{n,X}^{1}Y$
be a~first-order Lagrangian for $\pi:Y\rightarrow X$, locally expressed
by \eqref{eq:Lagrangian}. There exists a~unique Lepage equivalent
$Z_{\lambda}\in\Omega_{n}^{1}Y$ of $\lambda$, which satisfies $Z_{\lambda}=(\pi^{1,0})^{*}\rho$
for any $n$-form $\rho\in\Omega_{n}^{0}W$ on $W$ such that $h\rho=\lambda$.
With respect to a fibered chart $(V,\psi)$, $Z_{\lambda}$ has an
expression 
\begin{align}
Z_{\lambda} & =\mathscr{L}\omega_{0}+\sum_{k=1}^{n}\frac{1}{(n-k)!}\frac{1}{(k!)^{2}}\frac{\partial^{k}\mathscr{L}}{\partial y_{j_{1}}^{\sigma_{1}}\ldots\partial y_{j_{k}}^{\sigma_{k}}}\varepsilon_{j_{1}\ldots j_{k}i_{k+1}\ldots i_{n}}\label{eq:Fundamental}\\
 & \quad\cdot\omega^{\sigma_{1}}\land\ldots\wedge\omega^{\sigma_{k}}\wedge dx^{i_{k+1}}\wedge\ldots\wedge dx^{i_{n}}.\nonumber 
\end{align}
\end{lem}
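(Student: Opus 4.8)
The plan is to take the local expression \eqref{eq:Fundamental} as an ansatz: to show that it is chart independent, hence defines a global $n$-form $Z_{\lambda}\in\Omega_{n}^{1}Y$, to verify that $Z_{\lambda}$ is a Lepage equivalent of $\lambda$, and then to establish the uniqueness clause together with the compatibility $Z_{\lambda}=(\pi^{1,0})^{*}\rho$. First I would dispose of horizontalization: applying $h$ to \eqref{eq:Fundamental} termwise and using $h\omega^{\sigma}=0$ and $hdx^{i}=dx^{i}$, every summand with $k\geq1$ is killed and only $h(\mathscr{L}\omega_{0})=(\mathscr{L}\circ\pi^{2,1})\,\omega_{0}$ survives; thus $hZ_{\lambda}=\lambda$ up to the canonical projection, and it remains to prove that $Z_{\lambda}$ is a Lepage form.

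For the Lepage property I would use the criterion $hi_{\xi}d\rho=0$ for every $\pi^{1,0}$-vertical vector field $\xi$. Locally $\xi=\xi_{j}^{\sigma}\,\partial/\partial y_{j}^{\sigma}$, and since the only $1$-form factors occurring in \eqref{eq:Fundamental} are the $\omega^{\sigma}$ and the $dx^{i}$, all of which annihilate $\xi$, we get $i_{\xi}Z_{\lambda}=0$, hence by the Cartan formula $i_{\xi}dZ_{\lambda}=\mathcal{L}_{\xi}Z_{\lambda}$ (the Lie derivative). Now $\mathcal{L}_{\xi}$ leaves the form factors of a coefficient untouched, while $\mathcal{L}_{\xi}\omega^{\sigma}=-\xi_{j}^{\sigma}\,dx^{j}$ is horizontal; consequently in $\mathcal{L}_{\xi}Z_{\lambda}$ only the $k=0$ and $k=1$ terms can produce a horizontal contribution. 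Differentiating $\mathscr{L}$ in the $k=0$ term yields $\xi_{j}^{\sigma}(\partial\mathscr{L}/\partial y_{j}^{\sigma})\,\omega_{0}$, whereas letting $\mathcal{L}_{\xi}$ act on the contact factor of the $k=1$ term --- which, after rewriting $\varepsilon_{ji_{2}\ldots i_{n}}dx^{i_{2}}\wedge\cdots\wedge dx^{i_{n}}=(n-1)!\,\omega_{j}$, equals $(\partial\mathscr{L}/\partial y_{j}^{\sigma})\,\omega^{\sigma}\wedge\omega_{j}$ --- yields $-\xi_{j}^{\sigma}(\partial\mathscr{L}/\partial y_{j}^{\sigma})\,\omega_{0}$ after $dx^{k}\wedge\omega_{j}=\delta_{j}^{k}\omega_{0}$. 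These cancel, so $hi_{\xi}dZ_{\lambda}=0$, $Z_{\lambda}$ is a Lepage form, hence a Lepage equivalent of $\lambda$, and $p_{1}dZ_{\lambda}=E_{\lambda}$ by Theorem \ref{Thm:Lepage--ELform}.

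The core of the argument is global well-definedness, and I would settle it together with uniqueness by one structural argument. By Theorem \ref{Thm:LepageEquiv} every Lepage equivalent of $\lambda$ on $J^{1}Y$ equals $\Theta_{\lambda}+d\mu+\eta$; one imposes the additional requirement that every contact component be generated by the order-zero contact $1$-forms $\omega^{\sigma}$ alone (no $\omega_{j}^{\sigma}$). This requirement is chart independent, because the contact forms transform among themselves, $\bar{\omega}^{\nu}=(\partial\bar{y}^{\nu}/\partial y^{\sigma})\,\omega^{\sigma}$, and it is exactly the property singled out in the statement, since a pullback $(\pi^{1,0})^{*}\rho$ from $Y$ has contact components of precisely this type. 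An induction on the contact degree, normalizing $p_{0}$ and $p_{1}$ by $h\rho=\lambda$ and the Lepage identities and then fixing $p_{2},\dots,p_{n}$ in turn, shows that the coefficients are forced to be exactly those in \eqref{eq:Fundamental}; hence the locally defined forms agree on overlapping fibered charts and patch together to a global $Z_{\lambda}\in\Omega_{n}^{1}Y$, the unique Lepage equivalent of $\lambda$ with that property. I expect the bookkeeping in this induction --- matching the Levi-Civita symbol and the multi-index symmetrizations against the factors $1/((n-k)!\,(k!)^{2})$ --- to be the main technical obstacle; checking chart-independence of \eqref{eq:Fundamental} head-on via the transformation law of $\partial^{k}\mathscr{L}/\partial y_{j_{1}}^{\sigma_{1}}\cdots\partial y_{j_{k}}^{\sigma_{k}}$ leads to the same computation.

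Finally, to record the compatibility $Z_{\lambda}=(\pi^{1,0})^{*}\rho$: given $\rho\in\Omega_{n}^{0}W$ with $h\rho=\lambda$, I would expand $\rho$ in a fibered chart using $dy^{\sigma}=\omega^{\sigma}+y_{j}^{\sigma}\,dx^{j}$. Since the coefficients of $\rho$ are functions on $Y$, i.e. do not depend on the variables $y_{j}^{\sigma}$, the identity $h\rho=\lambda$ forces $\mathscr{L}$ to be a polynomial in the $y_{j}^{\sigma}$ whose iterated partial derivatives $\partial^{k}\mathscr{L}/\partial y_{j_{1}}^{\sigma_{1}}\cdots\partial y_{j_{k}}^{\sigma_{k}}$ reproduce, after accounting for the factors $1/((n-k)!\,(k!)^{2})$ and the antisymmetrization in \eqref{eq:Fundamental}, exactly the coefficients of the $k$-contact part of $(\pi^{1,0})^{*}\rho$. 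Therefore $(\pi^{1,0})^{*}\rho=Z_{\lambda}$ on $(\pi^{1,0})^{-1}(W)$, which in particular re-proves global well-definedness over the open set where an order-zero representative of $\lambda$ exists.
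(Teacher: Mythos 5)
Your first two paragraphs are fine: $h$ kills every summand of \eqref{eq:Fundamental} with a contact factor, and the verification of the Lepage property via $i_{\xi}Z_{\lambda}=0$, the Cartan formula, $\mathcal{L}_{\xi}\omega^{\sigma}=-\xi_{j}^{\sigma}dx^{j}$ and $dx^{k}\wedge\omega_{j}=\delta_{j}^{k}\omega_{0}$ is correct (the paper itself offers no proof, only the references to Krupka and Betounes). The gap is in your third paragraph, on which both uniqueness and global well-definedness of $Z_{\lambda}$ hinge.

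The characterization you propose --- ``the unique Lepage equivalent of $\lambda$ on $J^{1}Y$ whose contact components are generated by the $\omega^{\sigma}$ alone'' --- is false, so the induction that is supposed to force the coefficients of $p_{2},\dots,p_{n}$ has nothing to work with. Given $h\rho=\lambda$, the Lepage conditions constrain only the $1$-contact component; the components of contact degree $\geq 2$ are completely arbitrary, which is exactly the free term $\eta$ in Theorem \ref{Thm:LepageEquiv}. Concretely, for $m\geq 2$ the form $Z_{\lambda}+c\,\omega^{1}\wedge\omega^{2}\wedge dx^{3}\wedge\cdots\wedge dx^{n}$ (any constant $c$) is again a Lepage equivalent of $\lambda$ on $J^{1}Y$ whose contact parts are generated by the $\omega^{\sigma}$ alone: the extra term is $2$-contact and, since $d\omega^{\sigma}=-\omega_{j}^{\sigma}\wedge dx^{j}$, its exterior derivative is still $2$-contact, so $p_{1}d$ is unchanged. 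The property actually appearing in the statement, $Z_{\lambda}=(\pi^{1,0})^{*}\rho$, is strictly stronger than ``generated by the $\omega^{\sigma}$'': it forces the coefficients of all contact components to be projectable to $Y$ and, through $h\rho=\lambda$, to coincide with the polarized derivatives $\partial^{k}\mathscr{L}/\partial y_{j_{1}}^{\sigma_{1}}\cdots\partial y_{j_{k}}^{\sigma_{k}}$; replacing it by the weaker condition is precisely where your argument breaks, and with it the claimed chart-to-chart agreement. Consequently the chart-independence of \eqref{eq:Fundamental} must be proved directly, via the transformation law of the polarized derivatives of $\mathscr{L}$ against $\bar{\omega}^{\nu}=(\partial\bar{y}^{\nu}/\partial y^{\sigma})\,\omega^{\sigma}$ and the Jacobian/Levi-Civita identities --- the very computation you defer --- or by an intrinsic construction as in the cited sources. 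Note also that for a generic first-order $\lambda$ (e.g.\ $\mathscr{L}$ not polynomial of degree $\leq n$ in the $y_{j}^{\sigma}$) no order-zero $\rho$ with $h\rho=\lambda$ exists even locally, so your final paragraph settles well-definedness only for a special class of Lagrangians and cannot substitute for the general argument.
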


\begin{proof}
See \cite{Krupka-FundLepEq,Betounes}.
\end{proof}
$Z_{\lambda}$ \eqref{eq:Fundamental} is known as the \emph{fundamental
Lepage form,} or\emph{ Krupka\textendash Betounes form,} associated
to first-order Lagrangian \emph{$\lambda$;} original sources are
\cite{Krupka-FundLepEq} and \cite{Betounes}, further recent contributions
include \cite{Palese,Javier,UrbBra}.
\begin{rem}
One can directly verify that the Lepage equivalent \textit{$Z_{\lambda}$}
\eqref{eq:Fundamental} satisfies the equivalence
relation ``\textit{$Z_{\lambda}$}\emph{ is closed if and only if
the associated Lagrangian $\lambda=hZ_{\lambda}$ is trivial}'',
that is ``$dZ_{\lambda}=0$ if and only if\textit{ $E_{\lambda}=0$.''}
Since $E_{\lambda}=p_{1}dZ_{\lambda}$ \eqref{eq:E-PD}, it is immediate
that $\lambda\in\Omega_{n,X}^{1}Y$ is trivial, provided \textit{$Z_{\lambda}$}\textit{\emph{
is closed. However, the converse implication if a~non-trivial one.}} A~construction of (local, 1-contact) Lepage equivalents of higher-order Lagrangians satisfying this equivalence relation has been recently described in \cite{Voicu}.

Another remarkable property which \textit{$Z_{\lambda}$} \eqref{eq:Fundamental}
satisfies is the following: \textit{$Z_{\lambda}$ is $\pi^{1,0}$-projectable
if and only $E_{\lambda}$ is $\pi^{2,1}$-projectable.}
\end{rem}

\section{Trivial Lagrangians}

A~Lagrangian $\lambda$ is said to be \emph{variationally trivial}
(or \emph{null}), if the associated Euler\textendash Lagrange form
$E_{\lambda}$ vanishes identically. The following theorem describes
variationally trivial Lagrangians.
\begin{thm}
\label{Thm:Trivial} Let $\lambda\in\Omega_{n,X}^{r}Y$ be a~Lagrangian
of order $r$ for $\pi:Y\rightarrow X$. The following conditions
are equivalent:

\emph{(i)} $\lambda$ is trivial.

\emph{(ii)} For every fibered chart $(V,\psi)$ on $Y$ there exists
an $(n-1)$-form $\mu\in\Omega_{n-1}^{r-1}Y$ such that on $V^{r}$,
\[
\lambda=hd\mu.
\]

\emph{(iii)} For every fibered chart $(V,\psi)$ on $Y$ there exist
functions $g^{i}:V^{r}\rightarrow\mathbb{R}$ such that $\lambda=\mathscr{L}\omega_{0}$
on $V^{r}$, where
\begin{equation}
\mathscr{L}=d_{i}g^{i}.\label{eq:TrivialLagrangian}
\end{equation}
\end{thm}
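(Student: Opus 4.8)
The plan is to prove the cyclic chain of implications (ii) $\Rightarrow$ (iii) $\Rightarrow$ (i) $\Rightarrow$ (ii), since the first two are essentially local computations and the last is the substantive one.

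For (ii) $\Rightarrow$ (iii): given $\lambda = hd\mu$ on $V^r$ with $\mu \in \Omega_{n-1}^{r-1}Y$, expand $\mu$ in the fibered chart as a sum of an $\pi$-horizontal part $\mu_0$ and contact terms. Writing $\mu_0 = g^i \omega_i$ with $g^i : V^{r-1} \to \mathbb{R}$ (plus possibly higher-horizontal-degree pieces, but for an $(n-1)$-form the horizontal component is exactly of this form), one computes $hd\mu$. Using $h d(g^i \omega_i) = (d_j g^i)\, dx^j \wedge \omega_i = (d_i g^i)\,\omega_0$ and noting that $h d(\text{contact})$ still contributes only through its horizontalization — here one invokes that $h$ annihilates contact forms and that $hd$ of a contact form is again expressible via formal derivatives of the generating coefficients, ultimately of total-divergence type — one arrives at $\mathscr{L} = d_i g^i$. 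The cleanest route is to first reduce $\mu$ modulo contact forms: since $hd\mu = hd(h\mu) + hd(\text{contact part})$ and one checks the contact contribution to $h d$ is again a formal divergence, absorb it into the $g^i$. This is routine; I would present it compactly.

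For (iii) $\Rightarrow$ (i): if $\mathscr{L} = d_i g^i$ then apply the Euler--Lagrange operator and show $E_\sigma(d_i g^i) = 0$ identically. The slick way is to note that the Euler--Lagrange mapping kills total divergences — this is the classical "null Lagrangian" fact and can be proven by the commutation identity $\partial/\partial y^\sigma_{j_1 \ldots j_k} \circ d_i = d_i \circ \partial/\partial y^\sigma_{j_1 \ldots j_k} + \partial/\partial y^\sigma_{j_1 \ldots j_{k-1}}$ (with the understanding that the last term is dropped when $k = 0$), plugged into the alternating sum \eqref{eq:E-L--expressions}; the sum telescopes to zero. Alternatively, invoke Theorem~\ref{Thm:Lepage--ELform}: $\lambda = \mathscr{L}\omega_0$ with $\mathscr{L} = d_i g^i$ satisfies $\lambda = h d(g^i \omega_i)$, and $d(g^i \omega_i)$ is a Lepage equivalent (being exact, its $p_1 d$ vanishes), so $E_\lambda = p_1 d(g^i\omega_i) = p_1 d d(\ldots) = 0$. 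I would use this second argument since it is a one-liner given the machinery already set up.

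The main obstacle is (i) $\Rightarrow$ (ii): constructing, for a variationally trivial $\lambda$ of order $r$, an explicit $\mu \in \Omega_{n-1}^{r-1}Y$ with $\lambda = hd\mu$, with the sharp order bound $r-1$. The standard approach is a homotopy/fibered-homotopy operator argument: one builds $\mu$ by an explicit integral formula (the Vainberg--Tonti or Krupka fibered homotopy construction) applied to $\lambda$, and then verifies that triviality $E_\lambda = 0$ is exactly the condition making the "error term" vanish so that $hd\mu = \lambda$ holds on the nose. The delicate points are (a) checking the resulting $\mu$ genuinely lives on $J^{r-1}Y$ and not merely $J^rY$ — this requires integrating by parts in the homotopy formula and using the vanishing of the Euler--Lagrange expressions to discard the boundary-type contributions that would otherwise raise the order — and (b) that the construction is chart-independent in the stated sense (only local $\mu$ is claimed, so this is mild). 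I would either reproduce the explicit homotopy formula and the integration-by-parts reduction, or — more economically — cite that this is the classical structure theorem for variationally trivial Lagrangians (Krupka; Anderson--Duchamp; Tonti) and indicate the order count, since the excerpt's surrounding references (\cite{Krupka-Book,Volna,Handbook}) already develop exactly this apparatus.
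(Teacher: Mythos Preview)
The paper does not give a proof of this theorem at all: its proof environment contains only ``See \cite{KrupkaTrivial}; also cf.\ \cite{Volna}.'' So there is no argument to compare against beyond the cited literature, and your closing suggestion for (i) $\Rightarrow$ (ii) --- invoke the classical structure theorem for null Lagrangians via the fibered homotopy operator, as developed in those very references --- is precisely what the paper does.

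Your sketch of the two easy implications is sound, with one simplification worth noting. In (ii) $\Rightarrow$ (iii) you worry about the contact part of $\mu$ contributing a residual divergence to be ``absorbed'' into the $g^i$. In fact it contributes nothing: since $h$ is a morphism of exterior algebras, $h\omega^{\sigma}_{j_1\ldots j_s}=0$, and $h(d\omega^{\sigma}_{j_1\ldots j_s})=-y^{\sigma}_{j_1\ldots j_s kl}\,dx^l\wedge dx^k=0$ by symmetry, one has $hd(\text{contact})=0$ outright. Hence $\lambda=hd\mu=hd(h\mu)=hd(g^i\omega_i)=(d_ig^i)\,\omega_0$ in one line, with $g^i$ the coefficients of the horizontal part $h\mu=g^i\omega_i$. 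Your Lepage-form argument for (iii) $\Rightarrow$ (i) is correct and efficient: $\rho=d(g^i\omega_i)$ is exact, hence trivially Lepage, and $h\rho=\lambda$, so $E_\lambda=p_1d\rho=0$ by Theorem~\ref{Thm:Lepage--ELform}.
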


\begin{proof}
See \cite{KrupkaTrivial}; also cf. \cite{Volna}.
\end{proof}
As a~consequence of Theorem \ref{Thm:Trivial}, we now summarize
explicit chart conditions for a~trivial Lagrangian of \emph{second-order},
needed later in this paper. Let $\lambda\in\Omega_{n,X}^{2}Y$ be
a~Lagrangian for $Y$, locally expressed by $\lambda=\mathscr{L}\omega_{0}$
\eqref{eq:Lagrangian} with respect to a~fibered chart $(V,\psi)$,
$\psi=(x^{i},y^{\sigma})$, on $Y$, where $\mathscr{L}=\mathscr{L}(x^{i},y^{\sigma},y_{j}^{\sigma},y_{jk}^{\sigma})$,
$j\leq k$, is the associated Lagrange function defined on $V^{2}\subset J^{2}Y$,
and $E_{\sigma}(\mathscr{L})$ are the corresponding Euler\textendash Lagrange
expressions on $V^{4}\subset J^{4}Y$,
\begin{equation}
E_{\sigma}(\mathscr{L})=\frac{\partial\mathscr{L}}{\partial y^{\sigma}}-d_{i}\frac{\partial\mathscr{L}}{\partial y_{i}^{\sigma}}+d_{i}d_{j}\frac{\partial\mathscr{L}}{\partial y_{ij}^{\sigma}}.\label{eq:ELexpressions2nd}
\end{equation}

\begin{lem}
\label{Thm:Trivial-2} \emph{(a)} $\lambda$ is trivial if and only
if it satisfies the following conditions
\begin{align}
 & \frac{\partial\mathscr{L}}{\partial y^{\sigma}}-d_{i}^{\prime}\frac{\partial\mathscr{L}}{\partial y_{i}^{\sigma}}+d_{i}^{\prime}d_{j}^{\prime}\frac{\partial\mathscr{L}}{\partial y_{ij}^{\sigma}}=0,\label{eq:TC1}\\
 & \frac{\partial^{2}\mathscr{L}}{\partial y_{p}^{\nu}\partial y_{iq}^{\sigma}}-\frac{\partial^{2}\mathscr{L}}{\partial y_{pq}^{\nu}\partial y_{i}^{\sigma}}+2d_{j}^{\prime}\left(\frac{\partial^{2}\mathscr{L}}{\partial y_{pq}^{\nu}\partial y_{ij}^{\sigma}}\right)=0\qquad\mathrm{Sym}(pqi),\label{eq:TC2}\\
 & \frac{\partial^{3}\mathscr{L}}{\partial y_{st}^{\mu}\partial y_{pq}^{\nu}\partial y_{ij}^{\sigma}}=0\qquad\mathrm{Sym}(stj),\,\,\mathrm{Sym}(pqi),\label{eq:TC3}\\
 & \frac{\partial^{2}\mathscr{L}}{\partial y_{pq}^{\nu}\partial y_{ij}^{\sigma}}=0\qquad\mathrm{Sym}(pqij),\label{eq:TC4}
\end{align}
where $d_{i}^{\prime}$ is the cut formal derivative operator \emph{\eqref{eq:Cut}.}

\emph{(b)} For every fibered chart $(V,\psi)$ on $Y$ there exist
functions $g^{i}:V^{2}\rightarrow\mathbb{R}$ such that $\lambda=\mathscr{L}\omega_{0}$
on $V^{2}$, where
\[
\mathscr{L}=d_{i}g^{i},
\]
and
\begin{equation}
\frac{\partial g^{i}}{\partial y_{jk}^{\sigma}}+\frac{\partial g^{j}}{\partial y_{ki}^{\sigma}}+\frac{\partial g^{k}}{\partial y_{ij}^{\sigma}}=0.\label{eq:TCC}
\end{equation}
\end{lem}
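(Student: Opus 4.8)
The plan is to derive the chart conditions of part (a) by expanding the vanishing of the Euler–Lagrange expressions \eqref{eq:ELexpressions2nd} in terms of the cut formal derivatives, collecting the resulting polynomial in the highest-order jet coordinates, and separating the identically independent terms. Concretely, I would substitute $d_i = d_i' + y_{\dots i}^\tau\,\partial/\partial y_\dots^\tau$ into $E_\sigma(\mathscr{L})=\partial\mathscr{L}/\partial y^\sigma - d_i\,\partial\mathscr{L}/\partial y_i^\sigma + d_i d_j\,\partial\mathscr{L}/\partial y_{ij}^\sigma$ and observe that, since $\mathscr{L}$ depends only on coordinates up to $y_{jk}^\sigma$, the expression $E_\sigma(\mathscr{L})$ is a polynomial of degree three in the third- and fourth-order variables $y_{ijk}^\sigma$ and $y_{ijkl}^\sigma$. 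The requirement $E_\sigma(\mathscr{L})=0$ holds identically if and only if every coefficient of this polynomial vanishes; reading off the coefficients of the fourth-order terms gives \eqref{eq:TC4}, the coefficients cubic and quadratic in the third-order terms give \eqref{eq:TC3}, the terms linear in $y_{ijk}^\sigma$ give \eqref{eq:TC2}, and the term free of third- and fourth-order variables gives \eqref{eq:TC1}. The symmetrizations $\mathrm{Sym}(\dots)$ arise because $y_{ijk}^\sigma$ and $y_{ijkl}^\sigma$ are symmetric in all lower indices, so only the fully symmetrized parts of the coefficients are constrained; this amounts to the standard Euler–Lagrange "source form" argument applied in a fibered chart.

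For part (b), I would start from Theorem~\ref{Thm:Trivial}: triviality is equivalent to $\mathscr{L}=d_i g^i$ for some functions $g^i$ on $V^{r-1}=V^1$. A priori Theorem~\ref{Thm:Trivial}(iii) for a second-order Lagrangian only guarantees $g^i$ on $V^2$; the point of \eqref{eq:TCC} is to record the additional algebraic constraint that these $g^i$ must satisfy. The identity $\mathscr{L}=d_i g^i$ with $g^i=g^i(x^k,y^\sigma,y_j^\sigma,y_{jk}^\sigma)$ expands, via $d_i=d_i'+y_{\dots i}^\tau\partial/\partial y_\dots^\tau$, into a polynomial in the third-order variables $y_{jki}^\sigma$. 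Since the left side $\mathscr{L}$ is of order two, the coefficient of $y_{jki}^\sigma$ on the right — which is $\partial g^i/\partial y_{jk}^\sigma$ appropriately symmetrized — must vanish once we symmetrize over the three indices $i,j,k$ that are interchangeable in $y_{ijk}^\sigma$. This yields precisely \eqref{eq:TCC}. (The degree-zero part of the expansion recovers $\mathscr{L}=d_i' g^i + \sum \partial g^i/\partial y_j^\sigma\, y_{ji}^\sigma$, which one checks is automatically of second order.)

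The main obstacle, and the step deserving the most care, is the bookkeeping in part (a): correctly tracking how $d_i'$ and the "jet-raising" part of $d_i$ interact when $d_i d_j$ is applied, and being scrupulous about which symmetrizations survive. In particular, the cross terms — e.g. one factor of $d_i'$ acting on a coefficient that already depends on $y_{ij}^\sigma$ versus the jet-raising part producing $y_{ijk}^\sigma$ — must be sorted so that \eqref{eq:TC2} emerges with the factor $2$ on the $d_j'$ term and the correct index symmetry $\mathrm{Sym}(pqi)$, and so that no spurious contributions leak between the four conditions. A clean way to organize this is to grade everything by the total order of the jet variables appearing and to compare coefficients order by order, which makes the separation into \eqref{eq:TC1}–\eqref{eq:TC4} transparent. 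The remaining verifications are routine chain-rule computations, and the equivalence with the coordinate-free statement of Theorem~\ref{Thm:Trivial} is then immediate.
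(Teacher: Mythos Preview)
Your proposal is correct and follows essentially the same route as the paper: expand $E_\sigma(\mathscr{L})$ by splitting $d_i=d_i'+\text{(jet-raising part)}$, collect the resulting polynomial in the third- and fourth-order coordinates, and read off \eqref{eq:TC1}--\eqref{eq:TC4} as the vanishing of each homogeneous piece, while part~(b) is obtained from Theorem~\ref{Thm:Trivial}(iii) by requiring the third-order terms in $d_i g^i$ to cancel. One small correction: the expansion is only \emph{quadratic} in the third-order variables (no cubic terms arise, since $\partial\mathscr{L}/\partial y_{ij}^\sigma$ is already second-order), so \eqref{eq:TC3} comes solely from the coefficient of $y_{sti}^{\mu}y_{pqj}^{\nu}$.
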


\begin{proof}
Consider the Euler\textendash Lagrange expressions $E_{\sigma}(\mathscr{L})$,
associated to a~second-order Lagrangian $\lambda=\mathscr{L}\omega_{0}$.
We have
\begin{align*}
d_{i}\frac{\partial\mathscr{L}}{\partial y_{i}^{\sigma}} & =\frac{\partial^{2}\mathscr{L}}{\partial x^{i}\partial y_{i}^{\sigma}}+\frac{\partial^{2}\mathscr{L}}{\partial y^{\nu}\partial y_{i}^{\sigma}}y_{i}^{\nu}+\frac{\partial^{2}\mathscr{L}}{\partial y_{p}^{\nu}\partial y_{i}^{\sigma}}y_{pi}^{\nu}+\frac{\partial^{2}\mathscr{L}}{\partial y_{pq}^{\nu}\partial y_{i}^{\sigma}}y_{pqi}^{\nu}\\
 & =d_{i}^{\prime}\frac{\partial\mathscr{L}}{\partial y_{i}^{\sigma}}+\frac{\partial^{2}\mathscr{L}}{\partial y_{pq}^{\nu}\partial y_{i}^{\sigma}}y_{pqi}^{\nu},
\end{align*}
and
\begin{align*}
 & d_{i}d_{j}\frac{\partial\mathscr{L}}{\partial y_{ij}^{\sigma}}=d_{i}\left(d_{j}^{\prime}\frac{\partial\mathscr{L}}{\partial y_{ij}^{\sigma}}+\frac{\partial^{2}\mathscr{L}}{\partial y_{pq}^{\nu}\partial y_{ij}^{\sigma}}y_{pqj}^{\nu}\right)\\
 & =d_{i}^{\prime}d_{j}^{\prime}\frac{\partial\mathscr{L}}{\partial y_{ij}^{\sigma}}+d_{j}^{\prime}\left(\frac{\partial^{2}\mathscr{L}}{\partial y_{pq}^{\nu}\partial y_{ij}^{\sigma}}\right)y_{pqi}^{\nu}+\frac{\partial^{2}\mathscr{L}}{\partial y_{p}^{\nu}\partial y_{iq}^{\sigma}}y_{pqi}^{\nu}\\
 & +d_{i}^{\prime}\left(\frac{\partial^{2}\mathscr{L}}{\partial y_{pq}^{\nu}\partial y_{ij}^{\sigma}}\right)y_{pqj}^{\nu}+\frac{\partial^{3}\mathscr{L}}{\partial y_{st}^{\mu}\partial y_{pq}^{\nu}\partial y_{ij}^{\sigma}}y_{sti}^{\mu}y_{pqj}^{\nu}+\frac{\partial^{2}\mathscr{L}}{\partial y_{pq}^{\nu}\partial y_{ij}^{\sigma}}y_{pqji}^{\nu}.
\end{align*}
Thus \eqref{eq:ELexpressions2nd} now reads,
\begin{align}
E_{\sigma}(\mathscr{L}) & =\frac{\partial\mathscr{L}}{\partial y^{\sigma}}-d_{i}^{\prime}\frac{\partial\mathscr{L}}{\partial y_{i}^{\sigma}}+d_{i}^{\prime}d_{j}^{\prime}\frac{\partial\mathscr{L}}{\partial y_{ij}^{\sigma}}\nonumber \\
 & +\left(\frac{\partial^{2}\mathscr{L}}{\partial y_{p}^{\nu}\partial y_{iq}^{\sigma}}-\frac{\partial^{2}\mathscr{L}}{\partial y_{pq}^{\nu}\partial y_{i}^{\sigma}}+2d_{j}^{\prime}\left(\frac{\partial^{2}\mathscr{L}}{\partial y_{pq}^{\nu}\partial y_{ij}^{\sigma}}\right)\right)y_{pqi}^{\nu}\label{eq:ELcoor}\\
 & +\frac{\partial^{3}\mathscr{L}}{\partial y_{st}^{\mu}\partial y_{pq}^{\nu}\partial y_{ij}^{\sigma}}y_{sti}^{\mu}y_{pqj}^{\nu}+\frac{\partial^{2}\mathscr{L}}{\partial y_{pq}^{\nu}\partial y_{ij}^{\sigma}}y_{pqij}^{\nu},\nonumber 
\end{align}
hence $E_{\sigma}(\mathscr{L})$ vanish if and only if conditions
\eqref{eq:TC2} hold, proving (a).

Condition (b) follows from Theorem \ref{Thm:Trivial}, (iii), where
\eqref{eq:TCC} is implied by equation \eqref{eq:TrivialLagrangian}
satisfied identically on $V^{2}\subset J^{2}Y$.
\end{proof}

\section{The fundamental Lepage equivalent of a~second-order Lagrangian: order
reduction}

First, we present an order reduction condition, which allows us to
construct a~generalization of the fundamental Lepage equivalent $Z_{\lambda}$
\eqref{eq:Fundamental} for a~second-order Lagrangian $\lambda$.
From the Theorem \ref{Thm:LepageEquiv} on the structure of every~Lepage
equivalent of a~Lagrangian, it is immediate that a~differential
$n$-form to be found must be decomposable as $Z_{\lambda}=\Theta_{\lambda}+d\mu+\eta$
(up to a~canonical jet projection), where $\Theta_{\lambda}$ is
the generalized Poincar\'e\textendash Cartan form \eqref{eq:Poincare-Cartan-2ndOrder},
$\mu$ is a~contact $(n-1)$-form, and an $n$-form $\eta$ has the
order of contactness $\geq2$. 

Let us assume that $\Theta_{\lambda}$ \eqref{eq:Poincare-Cartan-2ndOrder}
is of \emph{the same order }as the Lagrangian $\lambda$. 

Note that this condition is automatically satisfied for first-order
Lagrangians but, nevertheless, for the second-order it restricts the
class of Lagrangians under consideration.
\begin{lem}
\label{lem:Order}Let $\lambda\in\Omega_{n,X}^{2}Y$ be a~second-order
Lagrangian for $\pi:Y\rightarrow X$. Then the generalized Poincar\'e--Cartan
form $\Theta_{\lambda}$ \eqref{eq:Poincare-Cartan-2ndOrder} is of
second-order, if and only if for every chart $(V,\psi)$, $\psi=(x^{i},y^{\sigma})$,
on $Y$,
\begin{align}
 & \frac{\partial^{2}\mathscr{L}}{\partial y_{pq}^{\nu}\partial y_{ij}^{\sigma}}+\frac{\partial^{2}\mathscr{L}}{\partial y_{ip}^{\nu}\partial y_{qj}^{\sigma}}+\frac{\partial^{2}\mathscr{L}}{\partial y_{qi}^{\nu}\partial y_{pj}^{\sigma}}=0,\label{eq:OrderCondition}
\end{align}
where $\mathscr{L}$ is the Lagrange function, associated to $\lambda$
\eqref{eq:Lagrangian}. 

For $n=2$, \eqref{eq:OrderCondition} read
\begin{align}
 & \frac{\partial^{2}\mathscr{L}}{\partial y_{11}^{\nu}\partial y_{11}^{\sigma}}=0,\,\,\frac{\partial^{2}\mathscr{L}}{\partial y_{11}^{\nu}\partial y_{12}^{\sigma}}=0,\,\,\frac{\partial^{2}\mathscr{L}}{\partial y_{22}^{\nu}\partial y_{21}^{\sigma}}=0,\,\,\frac{\partial^{2}\mathscr{L}}{\partial y_{22}^{\nu}\partial y_{22}^{\sigma}}=0,\nonumber \\
 & \frac{\partial^{2}\mathscr{L}}{\partial y_{11}^{\nu}\partial y_{22}^{\sigma}}+2\frac{\partial^{2}\mathscr{L}}{\partial y_{12}^{\nu}\partial y_{12}^{\sigma}}=0.\label{eq:OrderCond2}
\end{align}
\end{lem}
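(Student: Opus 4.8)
The plan is to locate, inside the local expression \eqref{eq:Poincare-Cartan-2ndOrder} of $\Theta_{\lambda}$, the unique term that can carry a dependence on third-order jet coordinates, and to read off when it vanishes. Since $\mathscr{L}$ is of order two, the coefficient functions $\mathscr{L}$, $\partial\mathscr{L}/\partial y_{j}^{\sigma}$, $\partial\mathscr{L}/\partial y_{ij}^{\sigma}$ and the generators $\omega^{\sigma}$, $\omega_{i}^{\sigma}$, $\omega_{j}$ occurring in \eqref{eq:Poincare-Cartan-2ndOrder} are all of order $\le 2$; the only constituent of $\Theta_{\lambda}$ which is a priori of order three is the total derivative $d_{i}(\partial\mathscr{L}/\partial y_{ij}^{\sigma})$ appearing in the coefficient of $\omega^{\sigma}\wedge\omega_{j}$. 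Splitting the formal derivative into its cut part \eqref{eq:Cut} and the remaining summand --- by the same computation as in the proof of Lemma \ref{Thm:Trivial-2} --- one obtains
\[
d_{i}\frac{\partial\mathscr{L}}{\partial y_{ij}^{\sigma}}
= d_{i}^{\prime}\frac{\partial\mathscr{L}}{\partial y_{ij}^{\sigma}}
+ \frac{\partial^{2}\mathscr{L}}{\partial y_{pq}^{\nu}\,\partial y_{ij}^{\sigma}}\,y_{pqi}^{\nu},
\]
where $d_{i}^{\prime}(\partial\mathscr{L}/\partial y_{ij}^{\sigma})$ is of order $\le 2$. Hence $\Theta_{\lambda}$ is $\pi^{3,2}$-projectable (that is, of second order) if and only if, for every fixed index $j$, the function $\frac{\partial^{2}\mathscr{L}}{\partial y_{pq}^{\nu}\partial y_{ij}^{\sigma}}\,y_{pqi}^{\nu}$ vanishes identically on $V^{3}$.

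The second step turns this functional identity into the algebraic condition \eqref{eq:OrderCondition}. The coordinates $y_{pqi}^{\nu}$ are totally symmetric in $p,q,i$, the independent ones (those with $p\le q\le i$) are functionally independent on $J^{3}Y$, and the expression $\frac{\partial^{2}\mathscr{L}}{\partial y_{pq}^{\nu}\partial y_{ij}^{\sigma}}\,y_{pqi}^{\nu}$ is linear in them; consequently it vanishes identically if and only if, for every fixed $j$, the totally symmetric part in $p,q,i$ of the coefficient $\frac{\partial^{2}\mathscr{L}}{\partial y_{pq}^{\nu}\partial y_{ij}^{\sigma}}$ vanishes. As this coefficient is already symmetric in $p,q$, its symmetrization over $p,q,i$ is, up to the factor $\frac{1}{3}$, the cyclic sum
\[
\frac{\partial^{2}\mathscr{L}}{\partial y_{pq}^{\nu}\,\partial y_{ij}^{\sigma}}
+ \frac{\partial^{2}\mathscr{L}}{\partial y_{ip}^{\nu}\,\partial y_{qj}^{\sigma}}
+ \frac{\partial^{2}\mathscr{L}}{\partial y_{qi}^{\nu}\,\partial y_{pj}^{\sigma}},
\]
and requiring this to vanish for all $j,\sigma,\nu$ is precisely \eqref{eq:OrderCondition}; the converse implication is immediate. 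This proves the first assertion.

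For $n=2$ I would substitute all admissible index values. Since the left-hand side of \eqref{eq:OrderCondition} is fully symmetric in $p,q,i$, it suffices to consider $(p,q,i)\in\{(1,1,1),(1,1,2),(1,2,2),(2,2,2)\}$, each with $j\in\{1,2\}$. The diagonal choices $(1,1,1)$ and $(2,2,2)$ give directly $\partial^{2}\mathscr{L}/(\partial y_{11}^{\nu}\partial y_{11}^{\sigma})=0$, $\partial^{2}\mathscr{L}/(\partial y_{11}^{\nu}\partial y_{12}^{\sigma})=0$ and $\partial^{2}\mathscr{L}/(\partial y_{22}^{\nu}\partial y_{22}^{\sigma})=0$, $\partial^{2}\mathscr{L}/(\partial y_{22}^{\nu}\partial y_{21}^{\sigma})=0$; the choice $(1,1,2)$ with $j=2$ yields the mixed relation $\partial^{2}\mathscr{L}/(\partial y_{11}^{\nu}\partial y_{22}^{\sigma})+2\,\partial^{2}\mathscr{L}/(\partial y_{12}^{\nu}\partial y_{12}^{\sigma})=0$; and, using the relations already obtained together with the symmetry $\partial^{2}\mathscr{L}/(\partial y_{ab}^{\nu}\partial y_{cd}^{\sigma})=\partial^{2}\mathscr{L}/(\partial y_{cd}^{\sigma}\partial y_{ab}^{\nu})$ (which holds for all $\nu,\sigma$), each remaining instance reduces to an identity carrying no new information. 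This is exactly the list \eqref{eq:OrderCond2}.

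I expect the one genuinely delicate point to be the bookkeeping in the second step: making the equivalence ``the linear form in $y_{pqi}^{\nu}$ vanishes'' $\Leftrightarrow$ ``the symmetric condition \eqref{eq:OrderCondition} holds'' watertight, which requires being explicit about the summation convention for $\partial/\partial y_{pq}^{\nu}$ on symmetric jet coordinates so that the combinatorial multiplicities attached to repeated indices are handled correctly; everything else is routine substitution.
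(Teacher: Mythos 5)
Your argument is correct and follows essentially the same route as the paper's (very brief) proof: the only possible third-order contribution to $\Theta_{\lambda}$ is $d_{i}\,\partial\mathscr{L}/\partial y_{ij}^{\sigma}$, and annihilating the terms linear in $y_{pqi}^{\nu}$, after symmetrization over $(p,q,i)$, yields exactly \eqref{eq:OrderCondition}, with \eqref{eq:OrderCond2} obtained by substituting the index values for $n=2$. You simply spell out the symmetrization bookkeeping and the $n=2$ case that the paper leaves implicit, so nothing further is needed.
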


\begin{proof}
The necessary and sufficient condition \eqref{eq:OrderCondition}
follows immediately from the chart expression \eqref{eq:Poincare-Cartan-2ndOrder}
of $\Theta_{\lambda}$, by means of annihilating terms linear in coordinates
$y_{pqi}^{\tau}$.
\end{proof}
\begin{lem}
\label{lem:Combination}Let $\lambda\in\Omega_{n,X}^{2}Y$ be a~second-order
Lagrangian for $\pi:Y\rightarrow X$ such that the generalized Poincar\'e--Cartan
form $\Theta_{\lambda}$ \eqref{eq:Poincare-Cartan-2ndOrder} is of
second-order. Then $\lambda$ is variationally trivial, if and only
if for every chart $(V,\psi)$, $\psi=(x^{i},y^{\sigma})$, on $Y$,
\begin{align}
 & \frac{\partial\mathscr{L}}{\partial y^{\sigma}}-d_{i}^{\prime}\frac{\partial\mathscr{L}}{\partial y_{i}^{\sigma}}+d_{i}^{\prime}d_{j}^{\prime}\frac{\partial\mathscr{L}}{\partial y_{ij}^{\sigma}}=0,\quad 
 \frac{\partial^{2}\mathscr{L}}{\partial y_{p}^{\nu}\partial y_{iq}^{\sigma}}-\frac{\partial^{2}\mathscr{L}}{\partial y_{pq}^{\nu}\partial y_{i}^{\sigma}}=0\qquad\mathrm{Sym}(pqi).\label{eq:Combination}
\end{align}

For $n=2$, \eqref{eq:Combination} read
\begin{align}
 & \frac{\partial\mathscr{L}}{\partial y^{\sigma}}-d_{i}^{\prime}\frac{\partial\mathscr{L}}{\partial y_{i}^{\sigma}}+d_{i}^{\prime}d_{j}^{\prime}\frac{\partial\mathscr{L}}{\partial y_{ij}^{\sigma}}=0,\quad i,j=1,2,\nonumber \\
 & \frac{\partial^{2}\mathscr{L}}{\partial y_{1}^{\nu}\partial y_{11}^{\sigma}}-\frac{\partial^{2}\mathscr{L}}{\partial y_{11}^{\nu}\partial y_{1}^{\sigma}}=0,\quad\frac{\partial^{2}\mathscr{L}}{\partial y_{2}^{\nu}\partial y_{22}^{\sigma}}-\frac{\partial^{2}\mathscr{L}}{\partial y_{22}^{\nu}\partial y_{2}^{\sigma}}=0,\nonumber \\
 & 2\frac{\partial^{2}\mathscr{L}}{\partial y_{1}^{\nu}\partial y_{12}^{\sigma}}-2\frac{\partial^{2}\mathscr{L}}{\partial y_{1}^{\sigma}\partial y_{12}^{\nu}}+\frac{\partial^{2}\mathscr{L}}{\partial y_{2}^{\nu}\partial y_{11}^{\sigma}}-\frac{\partial^{2}\mathscr{L}}{\partial y_{2}^{\sigma}\partial y_{11}^{\nu}}=0,\label{eq:Combination2}\\
 & 2\frac{\partial^{2}\mathscr{L}}{\partial y_{2}^{\nu}\partial y_{12}^{\sigma}}-2\frac{\partial^{2}\mathscr{L}}{\partial y_{12}^{\nu}\partial y_{2}^{\sigma}}+\frac{\partial^{2}\mathscr{L}}{\partial y_{1}^{\nu}\partial y_{22}^{\sigma}}-\frac{\partial^{2}\mathscr{L}}{\partial y_{22}^{\nu}\partial y_{1}^{\sigma}}=0.\nonumber 
\end{align}
\end{lem}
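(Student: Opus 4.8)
The plan is to combine the general triviality characterization of Lemma~\ref{Thm:Trivial-2}(a) with the order-reduction hypothesis of Lemma~\ref{lem:Order}, and show that under \eqref{eq:OrderCondition} the full system \eqref{eq:TC1}--\eqref{eq:TC4} collapses to the reduced system \eqref{eq:Combination}. First I would recall that by Lemma~\ref{Thm:Trivial-2}(a), $\lambda$ is variationally trivial if and only if conditions \eqref{eq:TC1}--\eqref{eq:TC4} hold. The claim is that, granted \eqref{eq:OrderCondition}, conditions \eqref{eq:TC3} and \eqref{eq:TC4} are automatically satisfied, and condition \eqref{eq:TC2} simplifies: its third term $2d_j'\bigl(\partial^2\mathscr{L}/\partial y_{pq}^\nu\partial y_{ij}^\sigma\bigr)$ drops out after symmetrization in $(pqi)$, leaving exactly the second equation in \eqref{eq:Combination}. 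So the proof is essentially a bookkeeping argument on symmetrizations of the second partials $\partial^2\mathscr{L}/\partial y_{pq}^\nu\partial y_{ij}^\sigma$.

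The key steps in order: (1) Observe that \eqref{eq:TC4}, i.e.\ $\partial^2\mathscr{L}/\partial y_{pq}^\nu\partial y_{ij}^\sigma = 0$ under full symmetrization $\mathrm{Sym}(pqij)$, follows from \eqref{eq:OrderCondition} by a purely algebraic manipulation: symmetrizing \eqref{eq:OrderCondition} over all four indices $p,q,i,j$ turns the three cyclic terms into $3$ times the totally symmetrized second derivative, forcing it to vanish. (2) Show \eqref{eq:TC3} follows from \eqref{eq:OrderCondition} as well. Differentiating \eqref{eq:OrderCondition} with respect to a further second-derivative coordinate $y_{st}^\mu$ gives a linear relation among third partials $\partial^3\mathscr{L}/\partial y_{st}^\mu\partial y_{pq}^\nu\partial y_{ij}^\sigma$; the desired symmetry $\mathrm{Sym}(stj),\mathrm{Sym}(pqi)$ then follows by the same cyclic-sum-to-$3\times$ argument applied in the appropriate index groups, using that the third partials are symmetric under interchange of the three pairs as a whole. (3) For \eqref{eq:TC2}, note that since $d_j'$ in \eqref{eq:Cut} does not differentiate with respect to the top-order coordinates $y_{ij}^\sigma$, the term $d_j'\bigl(\partial^2\mathscr{L}/\partial y_{pq}^\nu\partial y_{ij}^\sigma\bigr)$ is linear in the expression $\partial^3\mathscr{L}/\partial y_{ij}^\sigma\partial y_{pq}^\nu\partial(\cdot)$ contracted with lower-order coordinates plus an $x$- and $y$-derivative; carry out $\mathrm{Sym}(pqi)$ and use \eqref{eq:TC3}/\eqref{eq:OrderCondition} to kill it, so that \eqref{eq:TC2} reduces to the symmetrized version of $\partial^2\mathscr{L}/\partial y_p^\nu\partial y_{iq}^\sigma - \partial^2\mathscr{L}/\partial y_{pq}^\nu\partial y_i^\sigma = 0$. (4) Conversely, the reduced conditions \eqref{eq:Combination} together with the standing hypothesis \eqref{eq:OrderCondition} imply \eqref{eq:TC1}--\eqref{eq:TC4}, so the equivalence holds. (5) Finally, specialize $n=2$: substitute $i,j,p,q\in\{1,2\}$ into \eqref{eq:Combination}, using the symmetry $y_{12}^\sigma=y_{21}^\sigma$ and symmetrization over the three-index groups, to read off the four explicit scalar equations \eqref{eq:Combination2}; the first equation in \eqref{eq:Combination2} is just \eqref{eq:TC1} written out, and the remaining three are the independent components of $\mathrm{Sym}(pqi)$ applied to the second relation.

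The main obstacle I expect is Step~(3): carefully expanding $d_j'\bigl(\partial^2\mathscr{L}/\partial y_{pq}^\nu\partial y_{ij}^\sigma\bigr)$ via \eqref{eq:Cut} and verifying that \emph{every} resulting term vanishes after $\mathrm{Sym}(pqi)$ once \eqref{eq:OrderCondition} and its first derivatives are invoked. One must track which derivatives $d_j'$ produces — the explicit $\partial/\partial x^j$ term, the $y^\nu$-term, and the $y_p^\nu$-term, each multiplied by the appropriate jet coordinate — and check that the index $j$ being summed against $d_j'$ always ends up in a position where the cyclic identity \eqref{eq:OrderCondition} (or its differentiated form) applies after symmetrizing $p,q,i$. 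A subtlety here is that $\partial^2\mathscr{L}/\partial y_{pq}^\nu\partial y_{ij}^\sigma$ is already symmetric in $pq$, in $ij$, and under the pair-swap $(pq)\leftrightarrow(ij)$, so one must be attentive to which of these symmetries is being used at each stage and not inadvertently over-count. Once Step~(3) is settled the rest is routine index manipulation, and the $n=2$ reduction in Step~(5) is mechanical.
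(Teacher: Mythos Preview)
Your approach is correct and coincides with the paper's one-sentence proof, which simply states that \eqref{eq:Combination} is the reduction of \eqref{eq:TC1}--\eqref{eq:TC4} via \eqref{eq:OrderCondition}. One simplification for the step you flag as the main obstacle: since $\mathrm{Sym}(pqi)$ applied to $\partial^{2}\mathscr{L}/\partial y_{pq}^{\nu}\partial y_{ij}^{\sigma}$ (with $j$ fixed) is exactly $\tfrac{1}{3}$ times the left-hand side of \eqref{eq:OrderCondition}, the argument of $d_j'$ already vanishes identically under the symmetrization, so the third term of \eqref{eq:TC2} drops out without ever expanding $d_j'$.
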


\begin{proof}
Necessary and sufficient conditions \eqref{eq:Combination} for a~variationally
trivial Lagrangian are nothing but reduction of \eqref{eq:TC1}\textendash \eqref{eq:TC4},
Lemma \ref{Thm:Trivial-2}, with the help of \eqref{eq:OrderCondition}. 
\end{proof}
In the next Theorem we construct the fundamental Lepage equivalent
$Z_{\lambda}$ of a~second-order Lagrangian $\lambda$ over fibered
manifolds $\pi:Y\rightarrow X$, where $\dim X=2$.

Analogously to the Carath\'eodory form $\Lambda_{\lambda}$ \eqref{eq:Carat-n2},
associated to second-order Lagrangian $\lambda\in\Omega_{2,X}^{2}Y$,
suppose that $Z_{\lambda}$ is decomposed as a~sum of the generalized
Poincar\'e\textendash Cartan form $\Theta_{\lambda}$ \eqref{eq:Poincare-Cartan-2ndOrder}
and contact terms, generated by the wedge products $\omega^{\sigma}\wedge\omega^{\nu}$,
$\omega^{\sigma}\wedge\omega_{j}^{\nu}$, and $\omega_{i}^{\sigma}\wedge\omega_{j}^{\nu}$,
i.e.

\begin{align}
 & Z_{\lambda}=\Theta_{\lambda}+\frac{1}{2}P_{\sigma\nu}\omega^{\sigma}\wedge\omega^{\nu}+Q_{\sigma,\nu}^{j}\omega^{\sigma}\wedge\omega_{j}^{\nu}+\frac{1}{2}R_{\sigma,\nu}^{i,j}\omega_{i}^{\sigma}\wedge\omega_{j}^{\nu},\label{eq:FLE}
\end{align}
where $P_{\sigma\nu}$, $Q_{\sigma,\nu}^{j}$, and $R_{\sigma,\nu}^{i,j}$
are real-valued functions on $V^{3}\subset J^{3}Y$ such that $P_{\sigma\nu}$
is skew-symmetric in $\left(\sigma,\nu\right)$, and $R_{\sigma,\nu}^{i,j}$
is skew-symmetric in pairs $\left(i,\sigma\right),\left(j,\nu\right)$.
\begin{thm}
\label{thm:Main}Let $\lambda\in\Omega_{2,X}^{2}Y$ be a~second-order
Lagrangian for $\pi:Y\rightarrow X$ such that \eqref{eq:OrderCondition}
holds. The following two conditions are equivalent:

\emph{(i)} If $\lambda$ is variationally trivial, then $Z_{\lambda}$
\eqref{eq:FLE} is closed.

\emph{(ii) }For every chart $(V,\psi)$, $\psi=(x^{i},y^{\sigma})$,
on $Y$, $Z_{\lambda}$ \eqref{eq:FLE} is uniquelly determined by
means of real-valued functions $P_{\sigma\nu}$, $Q_{\sigma,\nu}^{j}$,
and $R_{\sigma,\nu}^{i,j}$, defined on $V^{2}\subset J^{2}Y$ as
\begin{align}
P_{\sigma\nu} & =\frac{1}{2}\left(\frac{\partial^{2}\mathscr{L}}{\partial y_{1}^{\sigma}\partial y_{2}^{\nu}}-\frac{\partial^{2}\mathscr{L}}{\partial y_{1}^{\nu}\partial y_{2}^{\sigma}}\right)\nonumber \\
 & +d_{1}^{\prime}\left(\frac{\partial^{2}\mathscr{L}}{\partial y_{1}^{\nu}\partial y_{12}^{\sigma}}-\frac{\partial^{2}\mathscr{L}}{\partial y_{1}^{\sigma}\partial y_{12}^{\nu}}\right)+d_{2}^{\prime}\left(\frac{\partial^{2}\mathscr{L}}{\partial y_{2}^{\sigma}\partial y_{12}^{\nu}}-\frac{\partial^{2}\mathscr{L}}{\partial y_{2}^{\nu}\partial y_{12}^{\sigma}}\right),\nonumber \\
Q_{\sigma,\nu}^{1} & =2\frac{\partial^{2}\mathscr{L}}{\partial y_{1}^{\sigma}\partial y_{12}^{\nu}}-\frac{\partial^{2}\mathscr{L}}{\partial y_{1}^{\nu}\partial y_{12}^{\sigma}}-\frac{\partial^{2}\mathscr{L}}{\partial y_{2}^{\nu}\partial y_{11}^{\sigma}}-2d_{2}^{\prime}\frac{\partial^{2}\mathscr{L}}{\partial y_{12}^{\sigma}\partial y_{12}^{\nu}},\label{eq:FLE-Conditions}\\
Q_{\sigma,\nu}^{2} & =-2\frac{\partial^{2}\mathscr{L}}{\partial y_{2}^{\sigma}\partial y_{12}^{\nu}}+\frac{\partial^{2}\mathscr{L}}{\partial y_{1}^{\nu}\partial y_{22}^{\sigma}}+\frac{\partial^{2}\mathscr{L}}{\partial y_{2}^{\nu}\partial y_{12}^{\sigma}}+2d_{1}^{\prime}\frac{\partial^{2}\mathscr{L}}{\partial y_{12}^{\sigma}\partial y_{12}^{\nu}},\nonumber \\
R_{\sigma,\nu}^{1,2} & =-2\frac{\partial^{2}\mathscr{L}}{\partial y_{12}^{\sigma}\partial y_{12}^{\nu}}=-R_{\sigma,\nu}^{2,1},\nonumber \\
R_{\sigma,\nu}^{1,1} & =0,\quad R_{\sigma,\nu}^{2,2}=0,\nonumber 
\end{align}
\end{thm}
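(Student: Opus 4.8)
The plan is to compute the exterior derivative $dZ_\lambda$ explicitly in a fibered chart and split it into its contact components, then impose closure against the trivial-Lagrangian conditions \eqref{eq:Combination2}. By Theorem \ref{Thm:Lepage--ELform}, $p_1dZ_\lambda=E_\lambda$ vanishes automatically for a trivial $\lambda$, so the content of implication (i)$\Rightarrow$(ii) lies entirely in the vanishing of $p_2dZ_\lambda$ (and $p_3dZ_\lambda$, if nonzero, though the order-reduction hypothesis \eqref{eq:OrderCondition}/\eqref{eq:OrderCond2} keeps everything low enough that the higher-contact parts are controlled). First I would expand
\[
dZ_\lambda=d\Theta_\lambda+\tfrac12\,dP_{\sigma\nu}\wedge\omega^\sigma\wedge\omega^\nu+dQ^j_{\sigma,\nu}\wedge\omega^\sigma\wedge\omega^\nu_j+\tfrac12\,dR^{i,j}_{\sigma,\nu}\wedge\omega^\sigma_i\wedge\omega^\nu_j+(\text{torsion terms from }d\omega^\sigma,\ d\omega^\sigma_i),
\]
using $d\omega^\sigma=-\omega^\sigma_i\wedge dx^i$ and $d\omega^\sigma_i=-\omega^\sigma_{ij}\wedge dx^j$. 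The generalized Poincaré--Cartan form contributes $p_1 d\Theta_\lambda=E_\lambda$ and a known 2-contact piece $p_2 d\Theta_\lambda$ whose coefficients involve $\partial^2\mathscr L/\partial y^\nu_p\partial y^\sigma_{ij}$ and $\partial\mathscr L/\partial y^\sigma_{ij}$; under \eqref{eq:OrderCond2} the $\omega^\sigma_{ij}$-terms that would push the contactness up are exactly the ones that cancel, which is why the ansatz \eqref{eq:FLE} with coefficients on $J^3Y$ suffices.

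Next I would collect the 2-contact part $p_2 dZ_\lambda$ by the three types of exterior generators it can contain: $\omega^\sigma\wedge\omega^\nu\wedge\omega_0$-type (coefficient a function), $\omega^\sigma\wedge\omega^\nu_j\wedge dx^i$-type, and $\omega^\sigma_i\wedge\omega^\nu_j\wedge dx^k$-type (for $n=2$ a three-form has room only for these once a single $dx$ is wedged in). Setting each independent coefficient to zero, using the skew-symmetries of $P,Q,R$ and the trivial-Lagrangian relations \eqref{eq:Combination2}, produces a linear system for $P_{\sigma\nu},Q^j_{\sigma,\nu},R^{i,j}_{\sigma,\nu}$. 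The $\omega^\sigma_i\wedge\omega^\nu_j$-level equations fix $R^{i,j}_{\sigma,\nu}$ directly: the coefficient there is essentially $\partial^2\mathscr L/\partial y^\sigma_{ij}\partial y^\nu_{k\ell}$ symmetrized, which by \eqref{eq:OrderCond2} forces $R^{1,1}=R^{2,2}=0$ and $R^{1,2}_{\sigma,\nu}=-2\,\partial^2\mathscr L/\partial y^\sigma_{12}\partial y^\nu_{12}$. Feeding this back, the $\omega^\sigma\wedge\omega^\nu_j$-level equations (which now also see the formal derivatives $d'_i R$ coming from $dR\wedge\omega^\sigma_i\wedge\omega^\nu_j$ and the torsion $-\omega^\sigma_i\wedge dx^i$ against the $Q$-term) determine $Q^1_{\sigma,\nu}$ and $Q^2_{\sigma,\nu}$ as in \eqref{eq:FLE-Conditions}; finally the $\omega^\sigma\wedge\omega^\nu$-level equation pins down $P_{\sigma\nu}$, again modulo the second equation of \eqref{eq:Combination2}. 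For the converse (ii)$\Rightarrow$(i), I would simply substitute the explicit $P,Q,R$ back into $p_2 dZ_\lambda$ and $p_3 dZ_\lambda$ and verify, term by term using \eqref{eq:OrderCond2} and \eqref{eq:Combination2}, that every coefficient vanishes — a direct check rather than a solve.

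**The main obstacle** I expect is bookkeeping: in the $Q$-level and $P$-level equations several sources of the same monomial must be combined — the derivative of the lower coefficient ($dP$, $dQ$), the formal-derivative tail of the next coefficient up ($d'_i R$ entering the $Q$ equations, $d'_i Q$ entering the $P$ equations), the torsion terms from $d\omega^\sigma$ and $d\omega^\sigma_i$, and the residual 2-contact part $p_2 d\Theta_\lambda$ of the Poincaré--Cartan form. Keeping the symmetrizations straight (the conditions are stated with explicit $\mathrm{Sym}$ in \eqref{eq:OrderCondition} and only for distinct index values when $n=2$) is where sign errors creep in. A secondary subtlety is verifying that the functions $P_{\sigma\nu},Q^j_{\sigma,\nu},R^{i,j}_{\sigma,\nu}$ produced by the closure equations, which a priori live on $J^3Y$, actually reduce to $J^2Y$: this uses \eqref{eq:OrderCond2} to kill the would-be $y^\tau_{pqi}$-dependence in the coefficients, paralleling the role of \eqref{eq:OrderCondition} in Lemma \ref{lem:Order}. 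Once these dependencies are shown to drop, uniqueness in (ii) is immediate because the closure equations at each contact level are solved (not merely constrained) for the corresponding coefficient.
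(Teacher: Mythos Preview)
Your overall strategy---compute $dZ_\lambda$ in a chart, split into contact components, solve for $P,Q,R$, then verify consistency---is exactly what the paper does. But your classification of the $2$-contact part is incomplete, and this leads you to misidentify which equations actually determine the coefficients.

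You list only the three generator types $\omega^\sigma\wedge\omega^\nu$, $\omega^\sigma\wedge\omega^\nu_j$, $\omega^\sigma_i\wedge\omega^\nu_j$ (each wedged with a horizontal $1$-form). You are missing the types that contain a \emph{second-order} contact $1$-form $\omega^\tau_{kl}$, namely $\omega^\sigma\wedge\omega^\tau_{kl}\wedge\omega_j$ and $\omega^\sigma_{kl}\wedge\omega^\nu_j\wedge\omega_m$. These do \emph{not} cancel under \eqref{eq:OrderCond2}; they arise unavoidably from the torsion $d\omega^\sigma_i=-\omega^\sigma_{ik}\wedge dx^k$ applied to both the $R$-term and the $\partial\mathscr L/\partial y^\sigma_{ij}\,\omega^\sigma_i\wedge\omega_j$ piece of $\Theta_\lambda$, and from the $Q$-term via $d\omega^\nu_j$, and from $\partial/\partial y^\tau_{kl}$ acting on the first $1$-contact coefficient of $\Theta_\lambda$. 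In the paper these are collected into a block $F_0$ (see \eqref{eq:F0-coef}), and it is precisely the vanishing of $F_0$ that fixes $R$ and $Q$ \emph{algebraically}: the coefficient of $\omega^\sigma_{12}\wedge\omega^\nu_{\kappa(j)}\wedge\omega_j$ gives $R^{j,\kappa(j)}_{\sigma,\nu}+2(-1)^{j-1}\partial^2\mathscr L/\partial y^\sigma_{12}\partial y^\nu_{12}=0$, the coefficient of $\omega^\sigma\wedge\omega^\tau_{\kappa(j)\kappa(j)}\wedge\omega_j$ pins down $Q^{\kappa(j)}_{\sigma,\tau}$, and so on. By contrast, your proposed ``$\omega^\sigma_i\wedge\omega^\nu_j$-level'' equation contains $d_k R^{i,j}_{\sigma,\nu}$ from the horizontal part of $dR$ together with $\partial^2\mathscr L/\partial y^\tau_k\partial y^\sigma_{ij}$ from $d\Theta_\lambda$; this is a \emph{differential} constraint on $R$, not an algebraic one, and it is in fact part of the residual block $F_1$ \eqref{eq:F1-Coef} that the paper verifies \emph{a posteriori} using \eqref{eq:Combination2} once $P,Q,R$ are already known. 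The same remark applies to your determination of $Q$ and $P$: the equation you describe at the $\omega^\sigma\wedge\omega^\nu_j$-level involves $d_k Q$ and belongs to the consistency check, not to the solve.

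So the fix is structural rather than conceptual: enlarge your list of independent $2$-contact generators to include the $\omega^\tau_{kl}$-types, read off $R$, $Q$, $P$ algebraically from those (this is the paper's $F_0$), and then relegate the purely first-order-contact types $\omega^\sigma_i\wedge\omega^\nu_j$, $\omega^\sigma\wedge\omega^\tau$, etc.\ (the paper's $F_1$) and the $3$-contact part $F_2$ to the verification step using \eqref{eq:Combination2}. Your remark about checking that $P,Q,R$ descend to $J^2Y$ is correct and is handled exactly as you suggest, via \eqref{eq:OrderCond2}.
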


\begin{proof}
Suppose that Lagrangian $\lambda\in\Omega_{2,X}^{2}Y$ is variationally
trivial and the generalized Poincar\'e\textendash Cartan form $\Theta_{\lambda}$
\eqref{eq:Poincare-Cartan-2ndOrder} is defined on $J^{2}Y$. Thus,
in abritrary fibered chart $(V,\psi)$, $\psi=(x^{i},y^{\sigma})$,
on $Y$, the associated Lagrange function $\mathscr{L}:V^{2}\rightarrow\mathbb{R}$
satisfies conditions \eqref{eq:TC1}\textendash \eqref{eq:TC3} of
Lemma \ref{Thm:Trivial-2}, and \eqref{eq:OrderCondition} of Lemma
\ref{lem:Order}. Note first that condition \eqref{eq:OrderCondition}
already implies \eqref{eq:TC4}, and using \eqref{eq:OrderCondition},
\[
d_{i}\frac{\partial\mathscr{L}}{\partial y_{ij}^{\sigma}}=d_{i}^{\prime}\frac{\partial\mathscr{L}}{\partial y_{ij}^{\sigma}}.
\]
 Hence, the exterior derivative of $\Theta_{\lambda}$ reads,
\begin{align}
d\Theta_{\lambda} & =d\mathscr{L}\wedge\omega_{0}+d\left(\frac{\partial\mathscr{L}}{\partial y_{j}^{\sigma}}-d_{i}\frac{\partial\mathscr{L}}{\partial y_{ij}^{\sigma}}\right)\wedge\omega^{\sigma}\wedge\omega_{j}+\left(\frac{\partial\mathscr{L}}{\partial y_{j}^{\sigma}}-d_{i}\frac{\partial\mathscr{L}}{\partial y_{ij}^{\sigma}}\right)d\omega^{\sigma}\wedge\omega_{j}\nonumber \\
 & +d\left(\frac{\partial\mathscr{L}}{\partial y_{ij}^{\sigma}}\right)\wedge\omega_{i}^{\sigma}\wedge\omega_{j}+\frac{\partial\mathscr{L}}{\partial y_{ij}^{\sigma}}d\omega_{i}^{\sigma}\wedge\omega_{j}\nonumber \\
 & =\left(\frac{\partial\mathscr{L}}{\partial y^{\sigma}}-d_{j}\frac{\partial\mathscr{L}}{\partial y_{j}^{\sigma}}+d_{i}d_{j}\frac{\partial\mathscr{L}}{\partial y_{ij}^{\sigma}}\right)\omega^{\sigma}\wedge\omega_{0}-\frac{\partial}{\partial y^{\tau}}\left(\frac{\partial\mathscr{L}}{\partial y_{j}^{\sigma}}-d_{i}^{\prime}\frac{\partial\mathscr{L}}{\partial y_{ij}^{\sigma}}\right)\omega^{\sigma}\wedge\omega^{\tau}\wedge\omega_{j}\label{eq:dTheta}\\
 & +\left(\frac{\partial^{2}\mathscr{L}}{\partial y^{\sigma}\partial y_{kj}^{\tau}}-\frac{\partial}{\partial y_{k}^{\tau}}\left(\frac{\partial\mathscr{L}}{\partial y_{j}^{\sigma}}-d_{i}^{\prime}\frac{\partial\mathscr{L}}{\partial y_{ij}^{\sigma}}\right)\right)\omega^{\sigma}\wedge\omega_{k}^{\tau}\wedge\omega_{j}\nonumber \\
 & -\frac{\partial}{\partial y_{kl}^{\tau}}\left(\frac{\partial\mathscr{L}}{\partial y_{j}^{\sigma}}-d_{i}^{\prime}\frac{\partial\mathscr{L}}{\partial y_{ij}^{\sigma}}\right)\omega^{\sigma}\wedge\omega_{kl}^{\tau}\wedge\omega_{j}-\frac{\partial^{2}\mathscr{L}}{\partial y_{k}^{\tau}\partial y_{ij}^{\sigma}}\omega_{i}^{\sigma}\wedge\omega_{k}^{\tau}\wedge\omega_{j}\nonumber \\
 & -\frac{\partial^{2}\mathscr{L}}{\partial y_{kl}^{\tau}\partial y_{ij}^{\sigma}}\omega_{i}^{\sigma}\wedge\omega_{kl}^{\tau}\wedge\omega_{j}.\nonumber 
\end{align}
From \eqref{eq:FLE}, we have
\begin{align*}
dZ_{\lambda} & =d\Theta_{\lambda}\\
 & +\frac{1}{2}dP_{\sigma\nu}\wedge\omega^{\sigma}\wedge\omega^{\nu}+\frac{1}{2}P_{\sigma\nu}d\left(\omega^{\sigma}\wedge\omega^{\nu}\right)+dQ_{\sigma,\nu}^{j}\wedge\omega^{\sigma}\wedge\omega_{j}^{\nu}+Q_{\sigma,\nu}^{j}d\left(\omega^{\sigma}\wedge\omega_{j}^{\nu}\right)\\
 & +\frac{1}{2}dR_{\sigma,\nu}^{i,j}\wedge\omega_{i}^{\sigma}\wedge\omega_{j}^{\nu}+\frac{1}{2}R_{\sigma,\nu}^{i,j}d\left(\omega_{i}^{\sigma}\wedge\omega_{j}^{\nu}\right),
\end{align*}
and combining with \eqref{eq:dTheta} we get a~decomposition of $dZ_{\lambda}$
containing independent base terms, 
\[
dZ_{\lambda}=E_{\lambda}+F_{0}+F_{1}+F_{2},
\]
where $E_{\lambda}$ is the Euler\textendash Lagrange form of $\lambda$,
$F_{0}$ and $F_{1}$ are the $2$-contact parts and $F_{2}$ is the~$3$-contact
part of $dZ_{\lambda}$. For indices $i$, $j$ running through $\{1,2\}$,
and $\kappa(1)=2,$ $\kappa(2)=1$, we have
\begin{align}
F_{0} & =(-1)^{j-1}\left(P_{\sigma\nu}+d_{\kappa(j)}Q_{\sigma,\nu}^{\kappa(j)}+\frac{\partial^{2}\mathscr{L}}{\partial y^{\sigma}\partial y_{12}^{\nu}}-\frac{\partial}{\partial y_{\kappa(j)}^{\nu}}\left(\frac{\partial\mathscr{L}}{\partial y_{j}^{\sigma}}-d_{i}^{\prime}\frac{\partial\mathscr{L}}{\partial y_{ij}^{\sigma}}\right)\right)\nonumber \\
 & \qquad\omega^{\sigma}\wedge\omega_{\kappa(j)}^{\nu}\wedge\omega_{j}\nonumber \\
 & +(-1)^{j-1}\left(Q_{\sigma,\tau}^{\kappa(j)}+\frac{\partial}{\partial y_{\kappa(j)\kappa(j)}^{\tau}}\left(\frac{\partial\mathscr{L}}{\partial y_{j}^{\sigma}}-d_{i}^{\prime}\frac{\partial\mathscr{L}}{\partial y_{ij}^{\sigma}}\right)\right)\omega^{\sigma}\wedge\omega_{\kappa(j)\kappa(j)}^{\tau}\wedge\omega_{j}\nonumber \\
 & +(-1)^{j-1}\left(Q_{\sigma,\tau}^{j}+2(-1)^{j}\frac{\partial}{\partial y_{12}^{\tau}}\left(\frac{\partial\mathscr{L}}{\partial y_{j}^{\sigma}}-d_{i}^{\prime}\frac{\partial\mathscr{L}}{\partial y_{ij}^{\sigma}}\right)\right)\omega^{\sigma}\wedge\omega_{12}^{\tau}\wedge\omega_{j}\label{eq:F0-coef}\\
 & +(-1)^{j-1}\left(R_{\sigma,\nu}^{\kappa(j),j}+(-1)^{j-1}\frac{\partial^{2}\mathscr{L}}{\partial y_{\kappa(j)\kappa(j)}^{\sigma}\partial y_{jj}^{\nu}}\right)\omega_{\kappa(j)\kappa(j)}^{\sigma}\wedge\omega_{j}^{\nu}\wedge\omega_{j}\nonumber \\
 & +(-1)^{j-1}\left(R_{\sigma,\nu}^{j,\kappa(j)}+2(-1)^{j-1}\frac{\partial^{2}\mathscr{L}}{\partial y_{12}^{\sigma}\partial y_{12}^{\nu}}\right)\omega_{12}^{\sigma}\wedge\omega_{\kappa(j)}^{\nu}\wedge\omega_{j}\nonumber \\
 & +(-1)^{j}R_{\sigma,\nu}^{j,j}\omega_{jj}^{\sigma}\wedge\omega_{j}^{\nu}\wedge\omega_{\kappa(j)}+(-1)^{j-1}R_{\sigma,\nu}^{j,j}\omega_{12}^{\sigma}\wedge\omega_{j}^{\nu}\wedge\omega_{j},\nonumber 
\end{align}
and
\begin{align}
F_{1} & =\left(\frac{\partial}{\partial y^{\sigma}}\left(\frac{\partial\mathscr{L}}{\partial y_{j}^{\tau}}-d_{i}^{\prime}\frac{\partial\mathscr{L}}{\partial y_{ij}^{\tau}}\right)+(-1)^{j-1}\frac{1}{2}d_{\kappa(j)}^{\prime}P_{\sigma\tau}\right)\omega^{\sigma}\wedge\omega^{\tau}\wedge\omega_{j}\quad\mathrm{Alt}(\sigma\tau)\nonumber \\
 & +(-1)^{j-1}\left(d_{\kappa(j)}Q_{\sigma,\nu}^{j}+\frac{\partial^{2}\mathscr{L}}{\partial y^{\sigma}\partial y_{jj}^{\nu}}-\frac{\partial}{\partial y_{j}^{\nu}}\left(\frac{\partial\mathscr{L}}{\partial y_{j}^{\sigma}}-d_{i}^{\prime}\frac{\partial\mathscr{L}}{\partial y_{ij}^{\sigma}}\right)\right)\omega^{\sigma}\wedge\omega_{j}^{\nu}\wedge\omega_{j}\nonumber \\
 & -\frac{\partial}{\partial y_{jj}^{\tau}}\left(\frac{\partial\mathscr{L}}{\partial y_{j}^{\sigma}}-d_{i}^{\prime}\frac{\partial\mathscr{L}}{\partial y_{ij}^{\sigma}}\right)\omega^{\sigma}\wedge\omega_{jj}^{\tau}\wedge\omega_{j}\nonumber \\
 & +\frac{1}{2}(-1)^{j-1}\left(Q_{\sigma,\nu}^{\kappa(j)}-Q_{\nu,\sigma}^{\kappa(j)}+(-1)^{j}\left(\frac{\partial^{2}\mathscr{L}}{\partial y_{\kappa(j)}^{\nu}\partial y_{12}^{\sigma}}-\frac{\partial^{2}\mathscr{L}}{\partial y_{\kappa(j)}^{\sigma}\partial y_{12}^{\nu}}\right)\right)\label{eq:F1-Coef}\\
 & \qquad\omega_{\kappa(j)}^{\sigma}\wedge\omega_{\kappa(j)}^{\nu}\wedge\omega_{j}\nonumber \\
 & +\frac{1}{2}(-1)^{j-1}\left(d_{\kappa(j)}R_{\sigma,\nu}^{j,j}+\frac{\partial^{2}\mathscr{L}}{\partial y_{j}^{\sigma}\partial y_{jj}^{\nu}}-\frac{\partial^{2}\mathscr{L}}{\partial y_{j}^{\nu}\partial y_{jj}^{\sigma}}\right)\omega_{j}^{\sigma}\wedge\omega_{j}^{\nu}\wedge\omega_{j}\nonumber \\
 & +(-1)^{j-1}\left(\frac{1}{2}d_{\kappa(j)}\left(R_{\sigma,\nu}^{1,2}-R_{\nu,\sigma}^{2,1}\right)+(-1)^{j}Q_{\nu,\sigma}^{j}+\frac{\partial^{2}\mathscr{L}}{\partial y_{j}^{\sigma}\partial y_{12}^{\nu}}-\frac{\partial^{2}\mathscr{L}}{\partial y_{\kappa(j)}^{\nu}\partial y_{jj}^{\sigma}}\right)\nonumber \\
 & \qquad\omega_{1}^{\sigma}\wedge\omega_{2}^{\nu}\wedge\omega_{j},\nonumber
\end{align}
and
\begin{align}
F_{2}= & \frac{1}{6}\left(\frac{\partial P_{\sigma\nu}}{\partial y^{\tau}}+\frac{\partial P_{\nu\tau}}{\partial y^{\sigma}}+\frac{\partial P_{\tau\sigma}}{\partial y^{\nu}}\right)\omega^{\tau}\wedge\omega^{\sigma}\wedge\omega^{\nu}\nonumber \\
 & +\frac{1}{2}\left(\frac{\partial P_{\sigma\nu}}{\partial y_{j}^{\tau}}-\frac{\partial Q_{\sigma,\tau}^{j}}{\partial y^{\nu}}+\frac{\partial Q_{\nu,\tau}^{j}}{\partial y^{\sigma}}\right)\omega^{\sigma}\wedge\omega^{\nu}\wedge\omega_{j}^{\tau}\nonumber \\
 & +\frac{1}{2}\frac{\partial P_{\sigma\nu}}{\partial y_{jk}^{\tau}}\omega_{jk}^{\tau}\wedge\omega^{\sigma}\wedge\omega^{\nu}+\frac{1}{2}\frac{\partial P_{\sigma\nu}}{\partial y_{jkl}^{\tau}}\omega_{jkl}^{\tau}\wedge\omega^{\sigma}\wedge\omega^{\nu}\label{eq:F2-Coef}
 \end{align}
 \begin{align*}
 & +\frac{1}{2}\left(\frac{\partial R_{\sigma,\nu}^{i,j}}{\partial y^{\tau}}-\frac{\partial Q_{\tau,\nu}^{j}}{\partial y_{i}^{\sigma}}+\frac{\partial Q_{\tau,\sigma}^{i}}{\partial y_{j}^{\nu}}\right)\omega^{\tau}\wedge\omega_{i}^{\sigma}\wedge\omega_{j}^{\nu}\nonumber \\
 & +\frac{\partial Q_{\sigma,\nu}^{j}}{\partial y_{kl}^{\tau}}\omega^{\sigma}\wedge\omega_{j}^{\nu}\wedge\omega_{kl}^{\tau}+\frac{\partial Q_{\sigma,\nu}^{j}}{\partial y_{klp}^{\tau}}\omega^{\sigma}\wedge\omega_{j}^{\nu}\wedge\omega_{klp}^{\tau}\nonumber \\
 & +\frac{1}{2}\frac{\partial R_{\sigma,\nu}^{i,j}}{\partial y_{k}^{\tau}}\omega_{k}^{\tau}\wedge\omega_{i}^{\sigma}\wedge\omega_{j}^{\nu}+\frac{1}{2}\frac{\partial R_{\sigma,\nu}^{i,j}}{\partial y_{kl}^{\tau}}\omega_{kl}^{\tau}\wedge\omega_{i}^{\sigma}\wedge\omega_{j}^{\nu}+\frac{1}{2}\frac{\partial R_{\sigma,\nu}^{i,j}}{\partial y_{klp}^{\tau}}\omega_{klp}^{\tau}\wedge\omega_{i}^{\sigma}\wedge\omega_{j}^{\nu}\nonumber 
\end{align*}
(cf. Theorem \ref{Thm:Lepage--ELform}). As we require that $dZ_{\lambda}$
vanishes, one can now recover concrete formulae of coefficients $P_{\sigma\nu}$,
$Q_{\sigma,\nu}^{j}$, and $R_{\sigma,\nu}^{i,j}$ in the expression
\eqref{eq:FLE} of $Z_{\lambda}$. Indeed, annihilating \eqref{eq:F0-coef}
we obtain the functions \eqref{eq:FLE-Conditions}, which are correctly
defined since $\mathscr{L}$ satisfies \eqref{eq:OrderCond2} and
the symmetric component of 
\[
d_{\kappa(j)}Q_{\sigma,\nu}^{\kappa(j)}+\frac{\partial^{2}\mathscr{L}}{\partial y^{\sigma}\partial y_{12}^{\nu}}-\frac{\partial}{\partial y_{\kappa(j)}^{\nu}}\left(\frac{\partial\mathscr{L}}{\partial y_{j}^{\sigma}}-d_{i}^{\prime}\frac{\partial\mathscr{L}}{\partial y_{ij}^{\sigma}}\right)
\]
is zero. 

It remains to show that all coefficients of forms $F_{1}$ \eqref{eq:F1-Coef} and $F_{2}$ \eqref{eq:F2-Coef}
vanish identically. To this can be, however, directly verified by
means of the assumption on variational triviality of $\lambda$, namely
conditions \eqref{eq:Combination2} of Lemma \ref{lem:Combination},
as well as partial derivative operators $\partial/\partial y_{j}^{\tau}$,
$\partial/\partial y_{jk}^{\tau}$ applied to condition \eqref{eq:TC1}.
In particular, the following identities follow from \eqref{eq:Combination2}
and are employed,
\begin{align*}
 & \frac{\partial^{3}\mathscr{L}}{\partial y_{1}^{\sigma}\partial y_{2}^{\tau}\partial y_{12}^{\nu}}+\frac{\partial^{3}\mathscr{L}}{\partial y_{1}^{\nu}\partial y_{2}^{\sigma}\partial y_{12}^{\tau}}+\frac{\partial^{3}\mathscr{L}}{\partial y_{1}^{\tau}\partial y_{2}^{\nu}\partial y_{12}^{\sigma}}\\
 & \quad-\frac{\partial^{3}\mathscr{L}}{\partial y_{1}^{\tau}\partial y_{2}^{\sigma}\partial y_{12}^{\nu}}-\frac{\partial^{3}\mathscr{L}}{\partial y_{1}^{\sigma}\partial y_{2}^{\nu}\partial y_{12}^{\tau}}-\frac{\partial^{3}\mathscr{L}}{\partial y_{1}^{\nu}\partial y_{2}^{\tau}\partial y_{12}^{\sigma}}=0,\\
 & \frac{\partial^{3}\mathscr{L}}{\partial y_{1}^{\nu}\partial y_{12}^{\sigma}\partial y_{12}^{\tau}}-\frac{\partial^{3}\mathscr{L}}{\partial y_{1}^{\sigma}\partial y_{12}^{\nu}\partial y_{12}^{\tau}}=0,\qquad\frac{\partial^{3}\mathscr{L}}{\partial y_{2}^{\nu}\partial y_{12}^{\sigma}\partial y_{12}^{\tau}}-\frac{\partial^{3}\mathscr{L}}{\partial y_{2}^{\sigma}\partial y_{12}^{\nu}\partial y_{12}^{\tau}}=0,
\end{align*}
\begin{align*}
 & \frac{\partial^{2}\mathscr{L}}{\partial y^{\sigma}\partial y_{1}^{\nu}}-\frac{\partial^{2}\mathscr{L}}{\partial y^{\nu}\partial y_{1}^{\sigma}}+\frac{1}{2}d_{2}^{\prime}\left(\frac{\partial^{2}\mathscr{L}}{\partial y_{1}^{\sigma}\partial y_{2}^{\nu}}-\frac{\partial^{2}\mathscr{L}}{\partial y_{1}^{\nu}\partial y_{2}^{\sigma}}\right)+d_{1}^{\prime}d_{2}^{\prime}\left(\frac{\partial^{2}\mathscr{L}}{\partial y_{1}^{\nu}\partial y_{12}^{\sigma}}-\frac{\partial^{2}\mathscr{L}}{\partial y_{1}^{\sigma}\partial y_{12}^{\nu}}\right)\\
 & +\frac{1}{2}d_{2}^{\prime}d_{2}^{\prime}\left(\frac{\partial^{2}\mathscr{L}}{\partial y_{1}^{\nu}\partial y_{22}^{\sigma}}-\frac{\partial^{2}\mathscr{L}}{\partial y_{1}^{\sigma}\partial y_{22}^{\nu}}\right)+d_{i}^{\prime}\left(\frac{\partial^{2}\mathscr{L}}{\partial y^{\nu}\partial y_{1i}^{\sigma}}-\frac{\partial^{2}\mathscr{L}}{\partial y^{\sigma}\partial y_{1i}^{\nu}}\right)=0,
\end{align*}
\begin{align*}
 & \frac{\partial^{2}\mathscr{L}}{\partial y^{\tau}\partial y_{2}^{\sigma}}-\frac{\partial^{2}\mathscr{L}}{\partial y^{\sigma}\partial y_{2}^{\tau}}+\frac{1}{2}d_{1}^{\prime}\left(\frac{\partial^{2}\mathscr{L}}{\partial y_{2}^{\tau}\partial y_{1}^{\sigma}}-\frac{\partial^{2}\mathscr{L}}{\partial y_{2}^{\sigma}\partial y_{1}^{\tau}}\right)+\frac{1}{2}d_{1}^{\prime}d_{1}^{\prime}\left(\frac{\partial^{2}\mathscr{L}}{\partial y_{2}^{\sigma}\partial y_{11}^{\tau}}-\frac{\partial^{2}\mathscr{L}}{\partial y_{2}^{\tau}\partial y_{11}^{\sigma}}\right)\\
 & +d_{1}^{\prime}d_{2}^{\prime}\left(\frac{\partial^{2}\mathscr{L}}{\partial y_{2}^{\sigma}\partial y_{12}^{\tau}}-\frac{\partial^{2}\mathscr{L}}{\partial y_{2}^{\tau}\partial y_{12}^{\sigma}}\right)+d_{i}^{\prime}\left(\frac{\partial^{2}\mathscr{L}}{\partial y^{\sigma}\partial y_{i2}^{\tau}}-\frac{\partial^{2}\mathscr{L}}{\partial y^{\tau}\partial y_{i2}^{\sigma}}\right)=0,\\
 & \frac{\partial^{2}\mathscr{L}}{\partial y^{\sigma}\partial y_{12}^{\nu}}+\frac{\partial^{2}\mathscr{L}}{\partial y^{\nu}\partial y_{12}^{\sigma}}-\frac{1}{2}\left(\frac{\partial^{2}\mathscr{L}}{\partial y_{1}^{\nu}\partial y_{2}^{\sigma}}+\frac{\partial^{2}\mathscr{L}}{\partial y_{2}^{\nu}\partial y_{1}^{\sigma}}\right)+2d_{1}^{\prime}d_{2}^{\prime}\frac{\partial^{2}\mathscr{L}}{\partial y_{12}^{\nu}\partial y_{12}^{\sigma}}\\
 & -d_{i}^{\prime}\frac{\partial^{2}\mathscr{L}}{\partial y_{i}^{\sigma}\partial y_{12}^{\nu}}+d_{j}^{\prime}\left(\frac{\partial^{2}\mathscr{L}}{\partial y_{2}^{\nu}\partial y_{1j}^{\sigma}}+\frac{\partial^{2}\mathscr{L}}{\partial y_{1}^{\nu}\partial y_{2j}^{\sigma}}\right)=0.
\end{align*}
The equivalence of (i) and (ii) is therefore proved.
\end{proof} 
\begin{rem}
The proof of Theorem \ref{thm:Main} shows that $Z_{\lambda}$ \eqref{eq:FLE}
satisfying the equivalence relation ``$Z_{\lambda}$ is closed if
and only if $\lambda$ is variationally trivial'' does \emph{not}
exist in general. In particular, without the assumption \eqref{eq:OrderCondition}
on order-reducibility of the generalized Poincar\'e--Cartan form $\Theta_{\lambda}$,
the coeffcients $P_{\sigma\nu}$, $Q_{\sigma,\nu}^{j}$, and $R_{\sigma,\nu}^{i,j}$
of $Z_{\lambda}$ are generally \emph{not} well-defined to ensure the closure
condition of $Z_{\lambda}$ for a~trivial Lagrangian $\lambda$. 

As an example illustrating this property, we refer to the {\em Camassa--Holm equation} and its Lagrangian on the second jet prolongation of fibered manifold $S^1\times \mathbb{R} \times \mathbb{R}$ over $S^1\times \mathbb{R}$ (see \cite{Kouranbaeva}), 
\begin{equation}
\mathscr{L}_{CH} = \frac{1}{2} \Big( y_1 (y_2)^2 + \frac{(y_{12})^2}{y_1} \Big),\label{eq:CH}
\end{equation}
which is quadratic in second derivatives and does {\em not} satisfy the order-reducibility condition \eqref{eq:OrderCondition}. Following the proof of Theorem \ref{thm:Main}, it can be directly shown that $Z_{\lambda}$ \eqref{eq:FLE}, associated to Lagrangian \eqref{eq:CH} and satisfying the equivalence relation \eqref{eq:Equivalence}, does {\em not} exist.

Note that the result of Theorem \ref{thm:Main} can be also directly verified
as follows: consider $Z_{\lambda}$ \eqref{eq:FLE} with coefficients
\eqref{eq:FLE-Conditions}, where the associated Lagrange function
$\mathscr{L}$ is of the form $\mathscr{L}=d_{i}g^{i}$, where \eqref{eq:TCC}
of Lemma \ref{Thm:Trivial-2} holds. One can then verify that $dZ_{\lambda}=0$
in a~straightforward way.

A possible generalization of the fundametal form, associated to a second-order Lagrangian in field theory, has been recently studied in \cite{Palese};  the obtained result is, however, not the case described by Theorem 4 since the corresponding Lepage form does not obey the closure condition \eqref{eq:Equivalence}.
\end{rem}

\begin{rem}
Clearly, the order-reducibility condition \eqref{eq:OrderCondition}
imposed beforehand is satisfied for Lagrangians $\lambda\in\Omega_{n,X}^{2}Y$
\emph{linear in second derivatives} $y_{ij}^{\tau}$. In this case,
if 
\[
\mathscr{L}=A+B_{\sigma}^{ij}y_{ij}^{\sigma}=A+B_{\sigma}^{11}y_{11}^{\sigma}+2B_{\sigma}^{12}y_{12}^{\sigma}+B_{\sigma}^{22}y_{22}^{\sigma}
\]
is a~Lagrange function satisfying \eqref{eq:OrderCond2}, then
\begin{align*}
P_{\sigma\nu} & =\frac{1}{2}\left(\frac{\partial^{2}A}{\partial y_{1}^{\sigma}\partial y_{2}^{\nu}}-\frac{\partial^{2}A}{\partial y_{1}^{\nu}\partial y_{2}^{\sigma}}+\left(\frac{\partial^{2}B_{\tau}^{ij}}{\partial y_{1}^{\sigma}\partial y_{2}^{\nu}}-\frac{\partial^{2}B_{\tau}^{ij}}{\partial y_{1}^{\nu}\partial y_{2}^{\sigma}}\right)y_{ij}^{\tau}\right)\\
 & +(-1)^{j-1}2d_{j}^{\prime}\left(\frac{\partial B_{\sigma}^{12}}{\partial y_{j}^{\nu}}-\frac{\partial B_{\nu}^{12}}{\partial y_{j}^{\sigma}}\right),\\
Q_{\sigma,\nu}^{1} & =4\frac{\partial B_{\nu}^{12}}{\partial y_{1}^{\sigma}}-2\frac{\partial B_{\sigma}^{12}}{\partial y_{1}^{\nu}}-\frac{\partial B_{\sigma}^{11}}{\partial y_{2}^{\nu}},\quad Q_{\sigma,\nu}^{2}=-4\frac{\partial B_{\nu}^{12}}{\partial y_{2}^{\sigma}}+2\frac{\partial B_{\sigma}^{12}}{\partial y_{2}^{\nu}}+\frac{\partial B_{\sigma}^{22}}{\partial y_{1}^{\nu}},\\
R_{\sigma,\nu}^{1,1} & =0,\quad R_{\sigma,\nu}^{2,2}=0,\quad R_{\sigma,\nu}^{1,2}=0,\quad R_{\sigma,\nu}^{2,1}=0.
\end{align*}

Moreover, from \eqref{eq:ELcoor} it is easy to see that the order-reducibility \eqref{eq:OrderCondition} is a weaker condition than the requirement on a second-order Lagrangian leading to {\em second-order} Euler--Lagrange equations, cf. \cite{Sardanashvily}, which is the case of the Einstein--Hilbert gravitation Lagrangian of General Relativity. For the example of interaction of gravitational and electromagnetic fields Lagrangian, see  \cite{Volna}.
\end{rem}

Let $\lambda\in\Omega_{2,X}^{2}Y$ be a~Lagrangian for $\pi:Y\rightarrow X$
such that the generalized Poincar\'e-Cartan form $\Theta_{\lambda}$
\eqref{eq:Poincare-Cartan-2ndOrder} is of second-order. We call $Z_{\lambda}$
\eqref{eq:FLE}, satisfying one of the equivalent conditions of Theorem
\ref{thm:Main}, the \emph{fundamental Lepage equivalent}, associated
to a~second-order Lagrangian $\lambda$.

Our conjecture is that this kind of generalization
of the fundamental form using order reducibility can be provided for
a~general dimension of a~base manifold, which will be studied in
a future work.




\vspace{6pt}


\begin{thebibliography}{999}
\bibitem{Betounes}Betounes, D.E. Extension of the classical Cartan
form. {\em Phys. Rev. D} {\bf 1984}, {\em 29}, 599\textendash606.

\bibitem{Caratheodory}Carath\'eodory, C. \"Uber die Variationsrechnung
bei mehrfachen Integralen. {\em Acta Szeged Sect. Sci. Math.} {\bf 1929},  {\em 4}, 193\textendash 216.

\bibitem{CramSaun2}Crampin, M.; Saunders, D.J. The Hilbert--Carath\'{e}odory
form and Poincar\'{e}--Cartan forms for higher-order multiple integral
variational problems. {\em Houston J. Math.} {\bf 2004}, {\em 30}, 657\textendash 689.

\bibitem{Dedecker}Dedecker, P. On the generalization of symplectic geometry to multiple integrals in the calculus of variations. In {\em Differential Geometrical Methods in Mathematical Physics},  Lecture Notes in Mathematics {\em 570},; Bleuler, K., Reetz, A., Eds.; Springer: Berlin, 1977; pp. 395\textendash 456.

\bibitem{Garcia} Garc\'{i}a, P.L. The Poincar\'{e}\textendash Cartan
invariant in the calculus of variations. In {\em Symposia Mathematiea XIV} (Convegno di Geometria Simplettica e Fisica Matematica, INDAM, Rome, 1973); Academic Press: London, 1974; pp. 219\textendash 246.

\bibitem{Sardanashvily} Giachetta, G.; Mangiarotti, L; Sardanashvily, G. {\em Advanced classical field theory.} World Scientific: Singapore, 2009.

\bibitem{Goldschmidt}Goldschmidt, H.; Sternberg, S. The Hamilton\textendash Cartan
formalism in the calculus of variations. \emph{Ann. Inst. Henri Poincar\'{e}} {\bf 1973}, {\em 23}, 203\textendash 267.

\bibitem{Gotay}Gotay, M. J. An exterior differential systems approach
to the Cartan form, In: Geom\'{e}trie Symplectique et Physique Mathematique,
P. Donato et al. (Eds. ), Proc. Colloq., Aix-en-Provence, 1990; \emph{Prog.
Math.} {\bf 1991}, {\em 99}, 160\textendash 188.

\bibitem{HorakKolar}Hor\'{a}k, M.; Kol\'{a}\v{r}, I. On the higher-order
Poincar\'{e}\textendash Cartan forms. {\em Czech. Math. J.} {\bf 1983}, {\em 33}, 467\textendash 475.

\bibitem{Kouranbaeva}Kouranbaeva, S.; Shkoller, S. A variational approach to second-order multisymplectic field theory. {\em J. Geom. Phys.} {\bf 2000}, {\em 35(4)}, 333--366.

\bibitem{Folia}Krupka, D. {\em Some Geometric Aspects of Variational Problems in Fibered Manifolds}, Folia Facultatis Scientiarum Naturalium Universitatis Purkynianae Brunensis, Vol. 14; J. E. Purkyn\v{e} University: Brno, 1973, 65 pp., arXiv: math-ph/0110005.

\bibitem{Krupka-FundLepEq}Krupka, D. A map associated to the Lepagean
forms of the calculus of variations in fibered manifolds. {\em Czech.
Math. J.} {\bf 1977}, {\em 27}, 114\textendash 118.

\bibitem{Krupka-Lepage}Krupka, D. Lepagean forms in higher-order
variational theory. In {\em Modern Developments
in Analytical Mechanics}; Academy of Sciences of Turin, Elsevier:
1983, pp. 197\textendash 238.

\bibitem{KruStep}Krupka D.; \v{S}t\v{e}p\'{a}nkov\'{a}, O.
On the Hamilton form in second order calculus of variations. In: {\em Proc.
of the Meeting \textquotedblleft Geometry and Physics\textquotedblright},
Florence, October 1982; Pitagora Editrice Bologna: 1983, 85\textendash 101.

\bibitem{KrupkaTrivial}Krupka, D.; Musilov\'{a}, J. Trivial lagrangians
in field theory, \emph{Diff. Geom. Appl.} {\bf 1998}, {\em 9}, 293\textendash 305.

\bibitem{Handbook}Krupka, D. Global variational theory in fibred
spaces. In {\em Handbook of Global Analysis}; Krupka D., Saunders, D., Eds.;  Elsevier: Amsterdam, 2007; pp. 727\textendash 791.

\bibitem{KKS}Krupka D.; Krupkov\'a, O.; Saunders, D. The Cartan form and its generalizations in the calculus of variations. {\em Int. J. Geom. Meth. Mod. Phys.} {\bf 2010}, {\em 7(4)}, 631\textendash654.

\bibitem{Krupka-Book}Krupka, D. {\em Introduction to Global Variational
Geometry.} Atlantis Studies in Variational Geometry I; Atlantis Press: Amsterdam, 2015.

\bibitem{Palese} Palese, M.; Rossi, O.; Zanello, F. Geometric integration by parts and Lepage equivalents. {\em Diff. Geom. Appl.} {\bf 2022}, 81, 101866.  

\bibitem{Javier}P\'{e}rez \'{A}lvarez, J. On the Cartan\textendash Betounes
form. {\em Math. Nachrichten} {\bf 2019}, {\em 292(8)}, 1791\textendash 1799.

\bibitem{Rund}Rund, H. A Cartan form for the field theory of Carath\'{e}odory
in the calculus of variations of multiple integrals, \emph{Lecture
Notes in Pure and Appl. Math. }\textbf{100} (1985), 455\textendash 469.

\bibitem{SaunBook}Saunders, D.J. \emph{The Geometry of Jet Bundles.} Cambridge University Press: Cambridge, 1989. 

\bibitem{Saunders} Saunders D.J.; Crampin, M. The fundamental form of a homogeneous Lagrangian in two independent variables. {\em J. Geom. Phys.} {\bf 2010}, {\em 60(11)}, 1681\textendash1697.

\bibitem{Sniatycki}Sniatycki, J. On the geometric structure of classical
field theory in Lagrangian formulation, \emph{Proc. Camb. Phil. Soc.} {\bf 1970}, {\em 68}, 475\textendash 484. 

\bibitem{UrbBra}Urban, Z.; Brajer\v{c}\'ik, J. The fundamental Lepage form in variational theory for submanifolds. {\em Int. J.
Geom. Meth. Mod. Phys.} {\bf 2018}, {\em 15(6)}, 1850103.

\bibitem{UVcar}Urban, Z.; Voln\'a, J. On the Carath\'eodory form in higher-order variational field theory. \emph{Symmetry} {\bf 2021}, {\em 13}, 800.

\bibitem{Voicu}Voicu, N.; Garoiu, S.; Vasian, B. On the closure
property of Lepage equivalents of Lagrangians, \emph{Diff. Geom. Appl.
}\textbf{81} (2022), 101852.

\bibitem{Volna}Voln\'{a}, J.; Urban, Z. First-Order Variational Sequences
in Field Theory. In {\em The Inverse Problem of the
Calculus of Variations, Local and Global Theory}; D.V. Zenkov, Ed.;  Atlantis Press: Amsterdam, 2015, pp. 215\textendash 284.
\end{thebibliography}
\end{document}